\newcommand*{\Scale}[2][4]{\scalebox{#1}{$#2$}}
\newcommand{\subscript}[2]{$#1 _ #2$}
\newcommand{\norm}[1]{\left\lVert#1\right\rVert}
\newcommand{\Zd}[1]{\mathbb{Z}^{#1}}
\newcommand{\Rd}[1]{\mathbb{R}^{#1}}
\newcommand{\g}[1]{\mathbf{#1}}
\newcommand{\A}{\bold{A}}
\newcommand{\B}{\bold{B}}
\renewcommand{\b}{\bold{b}}
\newcommand{\X}{\mathcal{X}}
\renewcommand{\P}{\mathbb{P}}
\newcommand{\til}[1]{\widetilde{#1}}
\newcommand{\reci}[1]{\mathbf{R}(\g{#1})}
\newcommand{\rec}{\mathbf{R}}
\newcommand{\kept}{\mathbf{K}}
\newcommand{\rect}[2]{\mathbf{R}[#1,#2]}
\newcommand{\basis}[1]{\text{Basis}(#1)}
\newcommand{\death}[1]{\text{Death}(#1)}
\newcommand{\birth}[1]{\text{Birth}(#1)}
\newcommand{\life}[1]{\text{life}(#1)}
\newcommand{\flag}[1]{\text{Flag}(#1)}
\newcommand{\suppv}[1]{\text{Supp}_v(#1)}
\newcommand{\suppe}[1]{\text{Supp}_e(#1)}
\newcommand{\Y}{\mathcal{Y}}
\newcommand{\ind}{\mathds{1}}
\newcommand{\esp}{\mathbb{E}}
\newcommand{\dg}[2]{d^{\, #1}_{#2}}
\newcommand{\free}[1]{\xi^{\,#1}_{\,t}}
\newcommand{\dep}[2]{\eta^{\,#1 ,\, #2}_{\,t}}
\newcommand{\Q}{Q}
\theoremstyle{plain}
\newtheorem{theorem}{Theorem}
\newtheorem{lemma}[theorem]{Lemma}
\newtheorem{proposition}[theorem]{Proposition}
\newtheorem{example}[subsection]{Example}
\newtheorem*{example*}{Example}
\theoremstyle{definition}
\theoremstyle{remark}
\numberwithin{equation}{section}
\numberwithin{theorem}{section}
\providecommand{\keywords}[1]{\textit{Keywords:} #1}
\let\@fnsymbol\@arabic
\begin{document}

\title{\Large{Graphical Construction of Spatial Gibbs Random Graphs}}

\author{\normalsize{Andressa Cerqueira\footnote{Universidade Federal de S\~ao Carlos, Brazil.}\,\, and Nancy L. Garcia\footnote{Universidade Estadual de Campinas, Brazil.}}}

\date{\today}
\maketitle

\begin{abstract}
We consider a Random Graph Model on $\Zd{d}$ that incorporates the interplay between the statistics of the graph and the underlying space where the vertices are located. Based on a graphical construction of the model as the invariant measure of a birth and death process, we prove the existence and uniqueness of a measure defined on graphs with vertices in $\Zd{d}$ which coincides with the limit along the measures over graphs with finite vertex set. As a consequence, theoretical properties such as  exponential mixing of the infinite volume measure and central limit theorem for averages of a real-valued function of the graph are obtained. Moreover,  a perfect simulation algorithm based on the clan of ancestors is described in order to sample a finite window of the equilibrium measure defined on $\Zd{d}$. \end{abstract}

\keywords{
Spatial Gibbs Random Graphs, Gibbs measure, perfect simulation, clan of ancestor
}

\section{Introduction}

In recent years there has been an increasing interest in the study of probabilistic
models defined on graphs in order to describe the random interactions in a complex  network. The Erd\"os-R\'enyi random graph model \citep{erdHos1960evolution} might be the most famous random graph model. Despite of its well known properties, this model presents independent edges making it unsuitable to model networks that exhibt dependencies between edges.  The Exponential Random Graph Model (ERGM), that was pioneered by \cite{frank1986markov}, allows the representation of a large number of dependencies found in real networks, such as social networks \citep{robins2007introduction,robins2007recent}.  ERGM is a family of probability distributions on graphs belonging to the exponential family  (known as Gibbs distributions in Statistical Physics) such that the probability of a given graph depends only on  the statistics of the graph, such as number of edges, stars, triangles and so on. 
For an overview of some specifications to represent structural relations in ERGM see \cite{snijders2006new} and references therein. Different problems related with the ERGM have been studied in the literature, for example parameter inference \citep{caimo2011bayesian,hummel2012improving}, consistency results \citep{shalizi2013consistency}, mixing time properties \citep{bhamidi2008mixing}, graph limits \citep{chatterjee2013estimating}, Markov Chain Monte Carlo methods \citep{snijders2002markov} and perfect simulation algorithms \citep{butts2015novel,cerqueira2020note}.

Despite its applicability, this model does not incorporate properties of the underlying space where the vertices are located. For some real networks, it is reasonable to consider that the network connections might depend on the physical space where the graph is embedded. The Random Geometric Graph is the most studied spatial network model. In this model, proposed in the seminal work \cite{gilbert1961random}, the nodes of the graph are chosen randomly in a metric space and two nodes are connected by an edge when their distance does not exceed some fixed value. For more details about this model see \cite{penrose2003random}.

In order to combine the information about the spatial distance between the vertices of the graph and the graph statistics, the works \cite{ferrari2010gibbs}  and \cite{mourrat2018spatial} proposed different spatial random graph models governed by Gibbs-like measures. In \cite{ferrari2010gibbs}, the authors define a Gibbs measure on graphs with vertices located in $\mathbb{Z}^d$ that penalizes long edges and high degree.  On the other hand, \cite{mourrat2018spatial} introduce and study the behavior of a Spatial Gibbs Random Graph defined on an one-dimensional space that gives more weight to graphs with small average distance between vertices. They also consider that the existence of each edge in the graph has a cost that depends only on the underlying space. \cite{endo2020spatial} studied the local convergence properties of this model through its local limits. 

In this paper we introduce a random graph model that describes a balance between the statistics of the graph and the distance between the vertices in the underlying space. For a finite vertex set $V\subset \Zd{d}$, we define a Gibbs measure $\mu_{_V}$ with weights depending  on the length of the edges and a sufficient statistic which is a function of the graph that describes the interaction among the edges. Two natural questions arise: (1) existence: is there an infinite volume measure $\mu$ as the limit along the finite volume measures $\mu_{_V}$? And (2) uniqueness: if yes, is it unique? A construtive way to answer positively these questions is provided by a graphical construction of the Gibbs infinite measure as the invariant measure of a Markov process which can be seen as a birth-and-death process of edges. The graphical  construction is based on the {\it clan of ancestors} \citep{fernandez2001loss} of the spatial Gibbs measure and we prove the existence and uniqueness of an infinite volume measure $\mu$ which coincides with the limit along the finite volume measures $\mu_{_V}$ under some sufficient conditions.  To our knowledge, \cite{ferrari2010gibbs} is the first attempt to study the existence of such limits for a  Spatial Gibbs Random Graph measure that favors graphs with short edges and penalizes vertices with degree other than one.  Their results also involve percolation properties of the Gibbs measure. Their model is a particular case of the model proposed in this work. Since the uniqueness of the infinite measure was not a problem addressed by the authors, our results can be seen as an complement to their seminal work. 

%
Several properties are obtained from the clan of ancestors representation. For example, in the infinite volume, a vertex of the graph can be connected with infinitely many other vertices allowing a vertex to have infinite degree, this is not the case for the proposed model where we prove that $\mu$ is concentrated on the set of graphs with finite degree. Other results are exponential mixing of the infinite volume measure and central limit theorem for averages of a real-valued function of the graph. Moreover,  we obtain a perfect simulation scheme that delivers directly samples of finite windows of the infinite-volume measure without the need to perform limits such as $V \rightarrow \Zd{d}$. 

The {\it clan of ancestors} graphical construction used in this paper was originally proposed in \cite{fernandez2001loss} for measures that are absolutely continuous with respect to unit Poisson process on $\Rd{d}$.  Their construction, as well as ours, are based on the graphical representation of a Markovian process defined on the set of graphs that has $\mu$ as invariant measure. In the random graph model proposed here,  edges try to appear in the graph with an exponential rate, but some births are aborted according to some probability depending on the present configuration of the graph. Edges are removed from the graph at rate $1$. The process described above is dominated by a birth and death process for which edges are added in the graph every time they try to be born. This dominating process allows the presence of multiples edges in the graph giving rise to an independent multigraph process. To use the independent multigraph process to determine whether an edge $\{i,j\}$ is present at a time $t$ of the dependent process, it is necessary to look back in the past to the edges born before $\{i,j\}$ that could have an influence on the existence of $\{i,j\}$ at time $t$.  Once the clan is determined, it is necessary to perform a cleaning procedure forward on time to erase the edges that should not have been added in the graph. This construction directly induces a perfect simulation algorithm in order to sample a subgraph from $\mu$, as proposed in \cite{ferrari2002perfect}. This algorithm does not assume any monotonicity property of the process used in its construction as it is required in the case of perfectly sampling methods for the  ERGM \citep{cerqueira2020note}. Although the theorems follow straightforward from the graphical construction and the arguments of \cite{fernandez2001loss}  specialised to our setting, the results obtained for random graphs are novel and complement the ones found in the literature.

This paper is organized as follows. In Section \ref{sec:def} we introduce the definitions and notation to be used along the paper. Section \ref{sec:main} contains the main results and some examples. Sections \ref{sec:graphical_representation} and \ref{sec:infinite_volume_const} contain the graphical construction that is the key ingredient in the perfect simulation scheme as well in the proofs which are presented in Sections \ref{sec:perfect_simulation} and  \ref{sec:proofs} respectively. Conclusions and future works are discussed in Section \ref{sec:conclusion}.

\section{Definitions and notation}
\label{sec:def}

Let $V \subset \Zd{d}$ be a finite (or infinite) set and define the set of all simple graphs with vertex set $V$ by
\[\X^V=\{0,1\}^{E_V}\,,\]
where $E_V=\{\{i,j\} : i,j \in V, i\neq j\}$. \\
{\it Notation:} We denote $\X^{\Zd{d}}$ by $\X$ and  $E_{\Zd{d}}$ by $E$. We shall use $\g x \in \X^V$ to denote a graph where $x(\{i,j\})$ is $1$ if there exists an edge between $i$ and $j$ and $0$ otherwise, for $\{i,j\}\in E_V$. For simplicity, we write $x_{ij}$ for $x(\{i,j\})$. \\

Define $\g x^1_{ij}$ as the graph
which coincides with $\g x$ for all edges other than $\{i,j\}$ and $ x^1_{ij}=1$. In the same way, $\g x^0_{ij}$ is the graph
which coincides with $\g x$ for all edges other than $\{i,j\}$ and $ x^0_{ij}=0$. 

Define the length of an edge $\{i,j\}\in E$ by 
$$L(i,j)=\norm{i-j}\,,$$ 
where $\norm{\cdot}$ denotes the 1-norm. For $I, J \subset \Zd{d}$, define the distance
\begin{equation}
d(I,J) = \min\left\lbrace\,  L(i,j) : i\in I,\, j\in J, i\neq j\, \right\rbrace
\end{equation}
with the convention $d(I,I)=0$. We let $|G|_{d}$ denote the completion of the corresponding metric space as our topology on $\X^V$.

For any $i \in \Zd{d}$ denote by
\[B^i_k=\{\, j\in \Zd{d}, j\neq i : L(i,j) \leq k  \,\}\,\]
the ball of radius $k$ centered at $i$.

Let $\dg{i}{}(\g x)$ be the degree of vertex $i$ in the graph $\g x$ and let $\dg{i}{k}(\g x)$ be the degree of vertex $i$ restricted to the box $B_k^i$, that is, 
\begin{equation}\label{def:restricted_degree}
\dg{i}{k}(\g x) = \sum\limits_{j\in B_k^i}x_{ij}\,.
\end{equation}

For any $V\subset\Zd{d}$ and $f:\X^V \rightarrow \mathbb{R}$, define the vertex support of $f$, $\suppv{f}\subset \Zd{d}$, as the smallest set of vertices that fully determines the function $f$,  that is, $f$ depends on edges with one endpoint in $\suppv{f}$. More precisely, $\suppv{f}$  is uniquely determined by the following conditions
\begin{itemize}
\item $f(\g x)=f(\g x')$ whenever $\g{x}|_{\suppv{f}} =\g{x}'|_{\suppv{f}}$,
\item If $J\subset \Zd{d}$ is any other finite vertex set for which $f(\g x)=f(\g x')$ whenever $\g{x}|_{J} = \g{x}'|_{J}$, then $\suppv{f} \subset J$\,.
\end{itemize}
where $\g{x}|_{\Y}$ is the restricted graph to the set $\Y\subset \Zd{d}$ given by $\{\,x_{ij} : i,j \in \Y, i\neq j\}$. For instance, consider the function $f(\g x)=\ind\{x_{ij}x_{jk}x_{ik}=1\}$ that indicates the presence of a triangle between vertices $i,j$ and $k$, then $\suppv{f}=\{i,j,k\}$. In the case that $f$ represents the degree of vertex $i$ in the box $B^k_i$, $f(\g x) = d^k_i(\g x)$, the set $\suppv{f}$ is given by $B^i_k$.

\subsection{Spatial Gibbs Random Graphs}

The random graph model considered in this work describes an interplay between the sufficient statistics of the graph and the underlying space. We focus on random graphs with vertex set given by subsets of $\Zd{d}$.
We consider a model that penalizes connections between distant nodes in such a way that the Gibbs distribution defined on $V$  favors graphs with short edges.  Inspired by the Exponential Random Graph Model \citep{frank1986markov}, our model also penalizes edges through a sufficient statistic which is a function $F_V$ of the whole graph. In this way, for $\g{x} \in \X^V$ we define the following Hamiltonian 


\begin{equation}
\label{eq:hamiltonian_general}
H_{V}(\g x) = \sum\limits_{\{i,j\}\in E_V}L(i,j)x_{ij} +  F_V(\g x)\,.
\end{equation}

For each fixed $\beta \in \mathbb{R}^{+}$, the finite volume Gibbs distribution is given by
\begin{equation}\label{eq:def-gibbs}
\mu_{_{V}}(\g x) = \dfrac{\exp\{-\beta H_{V}(\g x)\}}{Z_V(\beta)}\,,
\end{equation}
where $Z_V(\beta)$ is the normalizing constant. \\

\noindent {\it Assumptions:} In this paper, at Equation \eqref{eq:hamiltonian_general}, we consider functions $F_V: \X^V\to\mathbb{R}$ such that 
\begin{enumerate}[label=(\subscript{A}{{\arabic*}})]
\item\label{assumption1} $F_V(\g x^1_{ij}) -F_V(\g x^0_{ij})$ only depends on edges that are connected with vertex $i$ or $j;$
\item\label{assumption2} there exists a finite constant $M$, $-1 < M < \infty$,  (which does not depend on $V$) such that
\begin{equation}\label{def:constant_M}
 M \le \min\limits_{\g x\in \X^V}\min\limits_{\{i,j\}E_V}(F_V(\g x^1_{ij}) -F_V(\g x^0_{ij}) )\,,
\end{equation}
for all finite $V$\,.
\item\label{assumption3} If $V \subset V'$ then $F_{V'}(\g{x}|_{V})=F_{V}(\g{x}|_{V})$.
\\ 
\end{enumerate}

Notice that the RHS of \eqref{def:constant_M} is always finite if $V$ is finite.


\begin{example} \label{ex:ferrari} [\cite{ferrari2010gibbs}] Consider a Gibbs measure that favors graphs with short edges, few vertices with degree zero and few vertices with degree greater or equal to $2$.  To this end,  define the function $F_V$ by
\begin{equation}\label{eq:function_F_ferrari}
F_V(\g x) = \sum\limits_{i\in V}\phi_i(\g{x})
\end{equation}
where
\begin{equation}
\phi_i(\g{x})=\left\{
\begin{array}{cl}
h_0, & \mbox{if } d_i(\g{x})=0\\
0, & \mbox{if } d_i(\g{x})=1 \\
h_1\binom{d_i(\g{x})}{2} & \mbox{if } d_i(\g{x})\geq 2
\end{array}
\right.
\end{equation}
where $0 < h_0 < h_1$ are fixed parameters. 

In this particular case,
\[
F_V(\g x^1_{ij}) -F_V(\g x^0_{ij}) = \phi_i(\g x^1_{ij}) - \phi_i(\g x^0_{ij} )+\phi_j(\g x^1_{ij}) - \phi_j(\g x^0_{ij})
\] 
and
\[\phi_i(\g x^1_{ij}) - \phi_i(\g x^0_{ij} )=\left\{
\begin{array}{cl}
-h_0, & \mbox{if } d_i(\g{x})=0\\
-h_0 & \mbox{if } d_i(\g{x})=1 \mbox{ and } x_{ij}=1\\
h_1, & \mbox{if } d_i(\g{x})=1 \mbox{ and } x_{ij}=0\\
h_1, & \mbox{if } d_i(\g{x})=2 \mbox{ and } x_{ij}=1\\
h_1\left(\binom{d_i(\g{x})}{2} - \binom{d_i(\g{x})-1}{2} \right) & \mbox{if } d_i(\g{x}) > 2 \mbox{ and } x_{ij}=1\\
h_1\left(\binom{d_i(\g{x})+1}{2} - \binom{d_i(\g{x})}{2} \right) & \mbox{if } d_i(\g{x})\geq 2 \mbox{ and } x_{ij}=0
\end{array}
\right.
\]
therefore, $M=-2h_0$.

\end{example}

\begin{example} \label{ex:stars} 
For the model that gives more weight to graphs with short edges and penalizes 2-stars, the Hamiltonian can be written by 

\begin{equation}\label{def:hamiltonian_2stars}
H_{V}(\g x) = \sum\limits_{\{i,j\}\in E_V}L(i,j)x_{ij} +  \frac{1}{2}\sum\limits_{i\in V}\sum\limits_{\substack{j,k \in V\\ j\neq  k}}L(i,j)L(j,k)x_{ik}x_{jk}\,.
\end{equation}

In this case, as well as for the models that penalize statistics of the graph such as k-stars and triangles, the constant $M$, given by \eqref{def:constant_M}, is equal to $0$. 
\end{example}

\subsection{Dependent graph process}

For a finite or infinite set $V \subset \Zd{d}$ and a real continuous function $f$ on $\X^V$, we define a Markov process on $\X^V$ for which the generator of the process is defined by
\begin{equation}\label{eq:generator_gibbs}
\begin{split}
\bold{A}^Vf(\g x) &= \sum\limits_{ \{i,j\}\in E_V }e^{-\beta L(i,j)-\beta M}\ind\{x_{ij}=0\}Q(\{i,j\}\,|\,\g x)[f(\g x^1_{ij}) - f(\g x)] \\
& \qquad+ \sum\limits_{\substack{ \{i,j\}\in E_V}} x_{ij}[f(\g x^0_{ij}) - f(\g x)]
\end{split}
\end{equation}
where
\begin{equation}\label{eq:prob_M}
Q(\{i,j\}\,|\,\g x) = \exp\left\lbrace -\beta(F_V(\g x^1_{ij}) -F_V(\g x))+\beta M\right\rbrace\,.
\end{equation}

It is worth noting that by the definition of $M$ in \eqref{def:constant_M} we have that $0\leq Q(\{i,j\}\,|\,\g x) \leq 1 $, for all $\{i,j\} \in E_V$ and all $\g x\in \X_V$. 

The process defined above has the following dynamics: when the current graph is $\g x$, the edge $\{i,j\}$ attempts to be born with rate $e^{-\beta L(i,j)-\beta M}$ and it is added in the graph with probability $\Q(\{i,j\}\,|\,\g x)$ if it is not already in the graph. An edge $\{i,j\}$ belonging to the current graph is removed at rate $1$. \\

\begin{example*} {\bf \ref{ex:stars}(cont.):} For the particular 2-stars model defined by the Hamiltonian \eqref{def:hamiltonian_2stars}, the generator of the dependent graph process is given by
\begin{equation}
\begin{split}
\bold{A}^Vf(\g x) &= \sum\limits_{ \{i,j\}\in E_V }e^{-\beta L(i,j)}\ind\{x_{ij}=0\}Q(\{i,j\}\,|\,\g x)[f(\g x^1_{ij}) - f(\g x)] \\
& \qquad+ \sum\limits_{\substack{ \{i,j\}\in E_V}} x_{ij}[f(\g x^0_{ij}) - f(\g x)]
\end{split}
\end{equation}
where
\begin{equation}
Q(\{i,j\}\,|\,\g x) = \exp\left\lbrace -\frac{\beta}{2}\left(\sum\limits_{k\in V}L(i,j)L(i,k)x_{ik} +\sum\limits_{k'\in V}L(i,j)L(j,k')x_{jk'} \right) \right\rbrace\,.
\end{equation}
\end{example*} 

For $V\subset \Zd{d}$ finite, it is easy to see that the invariant measure of the process defined above is $\mu_{_V}$ given by \eqref{eq:def-gibbs}. For $V$ infinite, the existence of the Markov process \eqref{eq:generator_gibbs} is not a priori clear, and needs to be proved.

\section{Main Results} \label{sec:main}

For $\beta\in \mathbb{R}^{+}$, define 
\begin{equation}\label{def:alpha}
\alpha(\beta) = 2^{d+1}e^{-\beta M}\left( \frac{1}{(1-e^{-\beta})^d}-1\right)\,
\end{equation}
and let 
\begin{equation}
\beta^{\ast} = \inf\{\beta > 0 : \alpha(\beta) \leq 1\}\,.
\end{equation}

Our first result guarantees the existence of at least one invariant measure of the process in the case of graphs with infinite number of vertices.

\begin{theorem}\label{theo:existence}
If $\alpha(\beta)<\infty$, then for any infinite $V \subset \Zd{d}$ the Markov process with generator $A^V$ exists and admits at least one invariant measure.
\end{theorem}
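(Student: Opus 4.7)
The plan is to combine the graphical construction developed in Sections~\ref{sec:graphical_representation}--\ref{sec:infinite_volume_const} with a weak-compactness argument on the finite-volume Gibbs measures. For existence of the process, I would first construct the free birth-death process on $\X^V$ in which every edge $\{i,j\}$ independently flips $0\to 1$ at rate $e^{-\beta L(i,j)-\beta M}$ and $1\to 0$ at rate $1$. Since balls of radius $k$ in the $L^1$ metric on $\Zd{2}$ contain $O(k)$ points, the series $\sum_{j\neq i}e^{-\beta L(i,j)}$ converges for every $i\in V$, which makes the Poisson graphical representation of the free process well-defined. The dependent process is then obtained by thinning each potential birth on $\{i,j\}$ with the acceptance probability $\Q(\{i,j\}\,|\,\g x)\in[0,1]$; by \ref{assumption1} this probability depends only on edges incident to $i$ or $j$, so the thinning can be resolved locally via the clan-of-ancestors procedure, yielding a process whose generator on cylinder functions is $\bold{A}^V$.

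For the invariant measure, I would exhaust $V$ by finite subsets $V_n\uparrow V$ and embed each $\mu_{V_n}$ into the space of probability measures on $\X^V$ by declaring every edge of $E_V\setminus E_{V_n}$ absent. Because $\X^V=\{0,1\}^{E_V}$ is compact in the product topology, weak compactness gives a convergent subsequence $\mu_{V_{n_k}}\to\mu$. To show $\mu$ is invariant, it suffices to verify $\int \bold{A}^V f\,d\mu = 0$ for every cylinder function $f$ depending on a finite edge set $W$. For such $f$, \ref{assumption1} makes $\bold{A}^V f$ a finite sum of continuous bounded functions of the configuration, with a $W$-dependent but $\g x$-uniform bound coming from $\sum_{\{i,j\}\in W}(e^{-\beta L(i,j)-\beta M}+1)$. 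Moreover, for $k$ large enough that $V_{n_k}$ contains every vertex incident to an edge of $W$, the generators $\bold{A}^V$ and $\bold{A}^{V_{n_k}}$ coincide on the support of the embedded $\mu_{V_{n_k}}$, so the finite-volume invariance $\int \bold{A}^{V_{n_k}} f\,d\mu_{V_{n_k}}=0$ gives $\int \bold{A}^V f\,d\mu_{V_{n_k}}=0$; weak convergence together with continuity and boundedness of $\bold{A}^V f$ then yield $\int \bold{A}^V f\,d\mu = 0$.

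The most delicate step is this last one, because it relies simultaneously on the continuity of $\Q(\{i,j\}\,|\,\cdot)$ in the product topology, which is provided by \ref{assumption1}, and on a uniform control of the local birth rates, which is provided by \ref{assumption2} through the factor $e^{-\beta M}$ and the summability of $e^{-\beta L(i,j)}$ on $\Zd{2}$. A tacit requirement in this scheme is that the family $\{F_V\}$ be specified consistently so that the increment $F_V(\g x^1_{ij}) - F_V(\g x^0_{ij})$ does not depend on the ambient $V$ whenever $\{i,j\}$ and its incident edges lie in $V$; this consistency is evident in the motivating examples and is what makes the two generators agree on the support of the embedded $\mu_{V_{n_k}}$.
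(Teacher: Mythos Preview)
For the existence of the process your plan coincides with the paper's: build the free birth--death process from the marked Poisson rectangles and recover the interacting one by the clan-of-ancestors thinning of Sections~\ref{sec:graphical_representation}--\ref{sec:infinite_volume_const}. What you leave implicit is precisely the step the paper actually carries out in its proof of the theorem: one must verify the hypothesis of Theorem~\ref{theo:clan_finite}-(1), that $\A^{i,t}\cap\rec[0,t]$ is almost surely finite. The paper does this by dominating the clan by a multitype branching process with Poisson offspring of total mean $\alpha(\beta)$ and observing that the expected number of individuals born by time $t$ is at most $e^{t\alpha(\beta)}<\infty$. Your summability of $\sum_j e^{-\beta L(i,j)}$ is exactly the ingredient $\alpha(\beta)<\infty$, but the branching domination still has to be invoked; locality from \ref{assumption1} alone does not guarantee that the backward exploration terminates.

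For the invariant measure your route is genuinely different from the paper's and contains a gap. The paper does not pass through weak limits of the $\mu_{V_n}$; once the process is constructed on the compact space $\X^V$ it simply cites the compactness result in Chapter~1 of Liggett (the Feller property of the semigroup following from a.s.\ finiteness of the clan in $[0,t]$, since $\eta^{\g x}_t$ restricted to any finite window depends on $\g x$ only through the finitely many initial edges touched by that clan). Your argument instead requires $\bold{A}^V f$ to be continuous on $\X^V$ so that weak convergence of $\mu_{V_{n_k}}$ carries $\int\bold{A}^V f\,d\mu_{V_{n_k}}=0$ to the limit, and you claim \ref{assumption1} supplies this continuity. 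It does not: \ref{assumption1} only says $\Q(\{i,j\}\,|\,\cdot)$ depends on edges incident to $i$ or $j$, and there are infinitely many such edges, so $\Q$ is not a cylinder function. In the $2$-star model of Example~\ref{ex:stars}, let $\g x^{(n)}$ carry a single edge $\{i,k_n\}$ with $L(i,k_n)=n$; then $\g x^{(n)}\to\g 0$ in the product topology while $\Q(\{i,j\}\,|\,\g x^{(n)})\to 0\neq 1=\Q(\{i,j\}\,|\,\g 0)$. Thus $\bold{A}^V f$ is discontinuous and the passage to the limit fails as written. Repairing this would require either an extra hypothesis ensuring continuity of $\Q$, or a quantitative truncation estimate together with a priori control of the subsequential limit (for instance, that it charges only finite-degree configurations), none of which follows from \ref{assumption1}--\ref{assumption2} alone.
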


\indent Theorem \ref{theo:uniqueness} guarantees the existence and uniqueness of an infinite volume distribution $\mu$ as a limit along sub-sequences of $\mu_{_V}$, as $V\rightarrow \Zd{d}$. Furthermore,  we show that under $\mu$ all graphs have finite vertex degree with probability $1$.

\begin{theorem}\label{theo:uniqueness}
If $\beta > \beta^{\ast}$, then the following statements hold:
\begin{enumerate}
\item For any $V \subset \Zd{d}$ there exists a unique process $\eta^{V}_{\,t}$ with generator $A^V$. The process has a unique invariant measure given by $\mu_{_V}$. For $V$ finite, $\mu_{_V}$ is the measure defined by \eqref{eq:def-gibbs}. For $V=\Zd{d}$, we denote $\mu_{_{\Zd{d}}}$ by $\mu$.
\item\textit{(Weak convergence)} As $V \rightarrow \Zd{d}$, $\mu_{_V}$ converges weakly to $\mu$ and $\mu$ is concentrated on
 \[\{\g x \in \X : \dg{i}{}(\g x) < \infty, \mbox{ for all } i \in \Zd{d}\}\,.\]
\end{enumerate}
\end{theorem}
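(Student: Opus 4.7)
The proof uses the clan-of-ancestors graphical construction of \cite{fernandez2001loss} referenced in the introduction. The plan is to couple the dependent processes $\eta^V_t$ for all $V\subset\Zd{2}$ to a common dominating free multigraph process $\xi_t$, and to exploit the bound $\alpha(\beta)<1$ (equivalent to $\beta>\beta^{\star}$) to show that the backward-in-time dependence of the target process has almost surely finite range. Concretely, construct $\xi_t$ on $\mathbb{N}^{E}$ so that each edge $\{i,j\}$ independently gives birth at rate $e^{-\beta L(i,j)-\beta M}$ and each living copy dies at rate $1$. Since $Q(\{i,j\}\,|\,\g x)\in[0,1]$ by the definition of $M$ in \eqref{def:constant_M}, the process with generator $A^V$ is realized as a thinning of the restriction of $\xi_t$ to $E_V$ in which every birth attempt of $\xi$ is accepted with probability $Q(\{i,j\}\,|\,\eta^V_{t^-})$; by Assumption~\ref{assumption1}, this acceptance probability depends only on edges incident to $i$ or $j$.

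For an edge $\{i,j\}$ at time $t$, recursively define its clan of ancestors in $\xi$ as the set of births that could influence whether $\{i,j\}\in\eta^V_t$. One generation back these are births, during the relevant time window, of edges sharing a vertex with $\{i,j\}$. The expected number of such children per ancestor is bounded by
\begin{equation}
2\sum_{k\in\Zd{2},\,k\neq i}e^{-\beta\norm{i-k}-\beta M} \;=\; 2 e^{-\beta M}\sum_{n\geq 1} 4n\,e^{-\beta n} \;=\; \frac{8\,e^{-\beta(M+1)}}{(1-e^{-\beta})^2} \;=\; \alpha(\beta),
\end{equation}
using that $\Zd{2}$ has $4n$ points at $L^1$-distance exactly $n$. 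For $\beta>\beta^{\star}$, $\alpha(\beta)<1$, so the dominating Galton--Watson ancestor process is subcritical and the entire clan is almost surely finite.

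Given a doubly-infinite version of $\xi$ and finite clans, define $\eta^V_t(\{i,j\})$ deterministically by running the forward cleaning procedure inside the clan, starting from the oldest ancestor and proceeding in chronological order using the acceptance probabilities $Q(\cdot\,|\,\cdot)$. This yields $\eta^V_t$ for every $V$ simultaneously as a measurable function of the \emph{same} $\xi$. The resulting process is stationary, and for finite $V$ its marginal coincides with \eqref{eq:def-gibbs} because that measure solves $\mu_{_V} A^V=0$. Uniqueness of the invariant measure follows because any such measure must assign to every local event the probability computed via this construction. For the weak convergence in part~(2), any continuous local $f$ depends only on edges in $D_W$ for some finite $W$; its clan of ancestors is almost surely finite and hence contained in $E_V$ for all $V\supset W$ sufficiently large, so $\mu_{_V}(f)\to\mu(f)$.

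Finally, under $\mu$ the degree $d_i(\g x)$ is dominated by the number of edges of $\xi_t$ incident to $i$ in stationarity, whose mean equals $\sum_{j\neq i}e^{-\beta L(i,j)-\beta M}/(1+e^{-\beta L(i,j)-\beta M})<\infty$ by the same geometric sum; hence $d_i(\g x)<\infty$ almost surely. The main technical obstacle is making the forward-cleaning construction rigorous on infinite $V$ and verifying that the resulting random graph is invariant under time shifts of $\xi$; this follows the \cite{fernandez2001loss} template, with the only setting-specific input being Assumption~\ref{assumption1}, which confines the dependency graph to edges sharing a vertex and thereby yields the clean branching bound above.
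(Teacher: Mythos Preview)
Your proposal is correct and follows essentially the same route as the paper: domination by the free multigraph process, subcriticality of the ancestor branching via the bound $\alpha(\beta)<1$ (the paper's Lemma~\ref{lemma:mean_process} and Proposition~\ref{prop:space_clan}), the forward cleaning inside finite clans (Theorem~\ref{theo:clan_finite}), and the weak-convergence argument via eventual equality of the finite and infinite clans (Lemma~\ref{lemma:space_ineq}). One minor slip: in your degree bound you write the stationary mean of $\xi_t$-edges incident to $i$ as $\sum_{j}e^{-\beta L(i,j)-\beta M}/(1+e^{-\beta L(i,j)-\beta M})$, but for the multigraph process on $\mathbb{N}^E$ the marginal of $\xi_t(i,j)$ is Poisson with mean $e^{-\beta L(i,j)-\beta M}$, so the correct bound is $\sum_j e^{-\beta L(i,j)-\beta M}=\tfrac{1}{2}\alpha(\beta)$, exactly as the paper obtains; this does not affect your conclusion.
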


\begin{example*} {\bf \ref{ex:ferrari}(cont.): } Although the authors have proved the existence of the infinite measure $\mu$ for this model, the uniqueness of this measure has not been addressed by them.
Thus, as complement of their work, we have by Theorem \ref{theo:uniqueness}-(1), that the uniqueness of $\mu$ is guaranteed whenever $0 < h_0 < 1/2$ and $\beta > \beta^{\ast}$. \\
\end{example*}

\begin{example*} {\bf \ref{ex:stars} (cont.): } For the particular 2-stars model, the value of $\alpha(\beta)$ in function of $\beta$ and the dimension $d$ is given in Figure \ref{fig:alpha_function}. \\

\begin{figure}[h]
\centering
\includegraphics[scale=0.55]{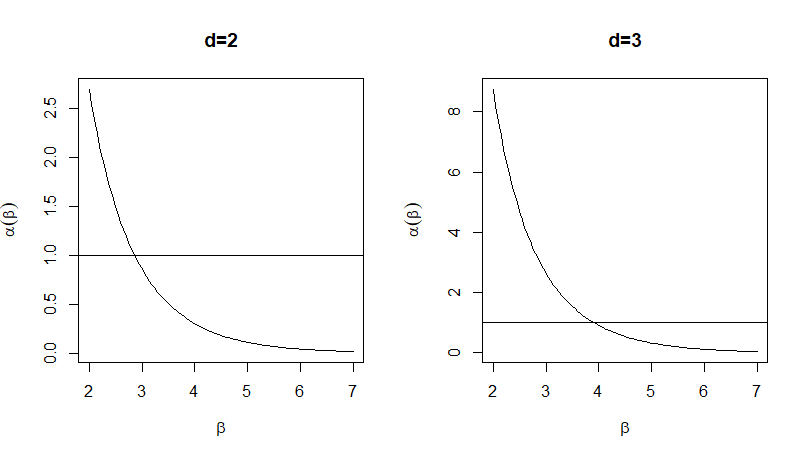}
\caption{Function $\alpha(\beta)$ for the model that penalizes 2-stars in the graph. }
\label{fig:alpha_function}
\end{figure}
\end{example*}

Once the weak convergence of the finite measure $\mu_{_V}$ to the infinite measure $\mu$ is guaranteed, Theorem \ref{theo_space_bounds} states that the convergence is exponentially fast in space. 

\begin{theorem}[\textbf{Exponential space convergence}]\label{theo_space_bounds}
Let $V$ be a finite subset of $\Zd{d}$ and assume that  $\beta > \beta^{\ast}$ and $M>0$. If $f$ is a measurable function with $\suppv{f}\subset V$, then
\begin{equation}
\left|\mu f - \mu_{_{V}}f \right|\, \leq \, \left(\dfrac{2e^{-(\beta-\til\beta)M}\alpha(\til{\beta})||f||_{\infty}}{1-e^{-(\beta-\til\beta)M}\alpha(\til{\beta})}\right)\sum\limits_{i \in \suppv{f}}e^{-(\beta-\til\beta)d(\{i\},V^c)}\,,
\end{equation} 
for any $\til{\beta} \in (\beta^{\ast},\beta)$.
\end{theorem}

Example \ref{ex:restricted_degree} below illustrate how Theorem \ref{theo_space_bounds} can be applied in order to better understand the relation between the infinite and finite measures through some characteristics of the graph. In particular, we analyze
the degree of a vertex restricted to a finite box with respect to the infinite and finite measures.

\begin{example}[\textit{Expectation of the restricted degree}]\label{ex:restricted_degree}
For any $i \in \Zd{2}$ and $k,l \in \mathbb{N}$, with $k \leq l$, set $f(\g x)=\dg{i}{k}(\g x )$, i.e, the degree of vertex $i$ restricted to the box $B_k^i$ as defined in \eqref{def:restricted_degree}.  Since $\suppe{v}=B^i_k$ and we have
\[||f||_{\infty} = |B^i_k| = 2k(k+1)\]
and
\begin{equation}\label{eq:col_restricted_1}
\begin{split}
\sum\limits_{j\in B^i_k}e^{-(\beta-\til\beta)d(\{j\},(B_l^i)^c)} &= \sum\limits_{s=1}^k 4se^{-(\beta-\tilde{\beta})(l+1-s)}\,.
\end{split}
\end{equation}
By Theorem~\ref{theo_space_bounds} we get
\begin{equation}\label{eq:example_rest_degree}
\left|\mu f - \mu_{_{B_l^i}}f \right|\, \leq \, \left(\dfrac{4k(k+1)e^{-(\beta-\til\beta)M}\alpha(\til{\beta})e^{-(\beta-\tilde{\beta})(l+1)}}{1-e^{-(\beta-\til\beta)M}\alpha(\til{\beta})}\right)\sum\limits_{s=1}^k 4se^{(\beta-\tilde{\beta})s}\,.
\end{equation}\\
Notice that if $l\to \infty$, then the right-hand side of \eqref{eq:example_rest_degree} goes to $0$.
\end{example}

Theorem \ref{theo:mixing} states the mixing property for the measure $\mu_{_V}$ for $V$ finite or infinite.

\begin{theorem}[\textbf{Exponential mixing}]\label{theo:mixing}
Let $V$ be a subset of $\,\Zd{d}$ (finite or infinite) and assume that  $\beta > \beta^{\ast}$ and $M>0$. If $f$ and $g$ are measurable functions with $\suppv{f},\suppv{g}\subset V$, then
\begin{equation}
\begin{split}
&\left|\mu_{_V} (fg) - \mu_{_{V}}f\mu_{_{V}}g \right|\\
& \qquad \leq \, 2\left(\dfrac{e^{-(\beta-\til\beta)M}\alpha(\til{\beta})}{1-e^{-(\beta-\til\beta)M}\alpha(\til{\beta})}\right)^2||f||_{\infty}||g||_{\infty}\sum\limits_{ \substack{ i \in \suppv{f} \\  j \in \suppv{g}}}L(i,j)e^{-(\beta-\til\beta)L(i,j)}\,,
\end{split}
\end{equation} 
for any $\til{\beta} \in (\beta^{\ast},\beta)$.

\end{theorem}

\begin{example}
For any $i,j,k,l \in \Zd{d}$ and $V\subset \Zd{d}$ set $f(\g x)=\ind\{x_{ij}=1 \}$ and $g(\g x)=\ind\{x_{kl}=1 \}$. Since $||f||_{\infty}=||g||_{\infty}=1$, $\suppv{f}=\{i,j\}$ and $\suppv{g}=\{k,l\}$  using Theorem \ref{theo:mixing}
\begin{equation}\label{eq:example_mixing1}
\begin{split}
&\left|\mu_{_V}( \g x : x_{ij}=1,x_{kl}=1) - \mu_{_V}(\g x : x_{ij}=1)\mu_{_V}(\g x : x_{kl}=1)\right|\, \\
&\qquad\leq 2\left(\dfrac{e^{-(\beta-\til\beta)M}\alpha(\til{\beta})}{1-e^{-(\beta-\til\beta)M}\alpha(\til{\beta})}\right)^2\sum\limits_{ \substack{ i \in \suppv{f} \\  j \in \suppv{g}}}L(i,j)e^{-(\beta-\til\beta)L(i,j)}\,.
\end{split}
\end{equation} 
Observe that when the edges $\{i,j\}$ and $\{k,l\}$ are far apart from each other, that is, $d(\{i,j\},\{k,l\})\to \infty$, the right-hand side of \eqref{eq:example_mixing1} goes to $0$.
\end{example}

The mixing property stated in Theorem \ref{theo:mixing} combined with the central limit theorem for stationary mixing random fields in \cite{bolthausen1982central} allow us to establish a generalization of a central limit theorem for functions of graphs with finite vertex support.

\begin{theorem}[\textbf{Central limit theorem}]\label{theo:central_limit}
Let $f$ be a measurable function on $\X$ with finite vertex support such that $\mu(|f|^{2+\delta}) < \infty$, for some $\delta>0$. Let $\tau_i$ be a translation by $i$ and assume that $\beta>\beta^{*}$ and \\
\[\sigma^2=\sum\limits_{i\in \Zd{d}}(\,\mu(f\tau_if)-\mu(f)\mu(\tau_if)\,)>0.\] Then, 
\begin{equation}\label{eq:theo_central_1}
\sum\limits_{i\in \Zd{d}} \mid\mu(f\tau_if)-\mu(f)\mu(\tau_if) \mid < \infty
\end{equation}
and 
\begin{equation}\label{eq:theo_central_2}
\dfrac{1}{\sqrt{|V|}}\left( \sum\limits_{i\in V}( \tau_i f- \mu_{_V} (\tau_i f) )  \right)\,\, \mathrel{\mathop{\Longrightarrow}^{\mathcal{D}}_{V\to \Zd{d}}}\,\, \mathcal{N}(0,\sigma^2)\,.
\end{equation}
\vspace{0.1cm}
\end{theorem}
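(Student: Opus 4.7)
My plan is to address the two conclusions separately. The absolute summability \eqref{eq:theo_central_1} will follow directly from the exponential mixing of Theorem~\ref{theo:mixing}, while the distributional convergence \eqref{eq:theo_central_2} will be obtained by recognising $(\tau_i f)_{i\in\Zd{2}}$ as a stationary random field on $\Zd{2}$ with exponentially decaying strong-mixing coefficients, and invoking a standard central limit theorem in the spirit of Bolthausen's 1982 result for stationary mixing random fields.

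For the summability I would set $S=\suppv{f}$, so that $\suppv{\tau_i f}=S+i$. Applying Theorem~\ref{theo:mixing} with $V=\Zd{2}$, $g=\tau_i f$, and any fixed $\til\beta\in(\beta^\ast,\beta)$ gives
\begin{equation}
|\mu(f\tau_i f)-\mu(f)\mu(\tau_i f)|\le C(\til\beta)\|f\|_\infty^2\sum_{j\in S,\,k\in S+i}L(j,k)e^{-(\beta-\til\beta)L(j,k)}.
\end{equation}
For $\norm{i}$ larger than twice the diameter of $S$ we have $L(j,k)\ge \norm{i}/2$, so each term is $O(\norm{i}e^{-(\beta-\til\beta)\norm{i}/2})$. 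Since the number of lattice sites on the sphere of radius $r$ in $\Zd{2}$ grows only linearly in $r$, summing over $i\in\Zd{2}$ yields the finite bound \eqref{eq:theo_central_1}.

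For the CLT itself, the first ingredient is translation invariance of $\mu$: the Hamiltonian \eqref{eq:hamiltonian_general} is shift invariant (both $L(i,j)=\norm{i-j}$ and the natural choices of $F_V$ commute with lattice translations), the graphical construction inherits this from the underlying i.i.d.\ Poisson marks, and uniqueness in Theorem~\ref{theo:uniqueness} forces $\mu\circ\tau_i^{-1}=\mu$. Setting $Y_i:=\tau_i(f-\mu f)$, the field $(Y_i)_{i\in\Zd{2}}$ is then stationary, centred, and in $L^{2+\delta}$. The mixing bound of Theorem~\ref{theo:mixing}, specialised to bounded functions supported on disjoint vertex sets, yields a strong-mixing coefficient of the form $\alpha_\mu(m,n,r)\le \phi(m,n)e^{-(\beta-\til\beta)r/2}$ between $\sigma$-algebras generated by $m$ and $n$ coordinates at spatial distance $r$, with $\phi$ polynomial in $m,n$. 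Together with the $L^{2+\delta}$ integrability and the covariance summability from the previous step (which identifies $\sigma^2$ as the asymptotic per-site variance), this rate is more than enough to apply Bolthausen's CLT for stationary $\alpha$-mixing random fields on $\Zd{2}$, giving $|V|^{-1/2}\sum_{i\in V}Y_i\Rightarrow\mathcal{N}(0,\sigma^2)$ along an exhausting sequence of boxes.

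The final step is to replace $\mu f$ by $\mu_{_V}f$ in the centring; the discrepancy contributes a deterministic $|V|^{1/2}(\mu f-\mu_{_V}f)$, and Theorem~\ref{theo_space_bounds} applied to $f$ (whose support stays fixed while $V$ grows) gives $|\mu f-\mu_{_V}f|=O(e^{-c\,d(\suppv{f},V^c)})$, exponential in the distance to the boundary and therefore dominating the polynomial factor $|V|^{1/2}$. The step I expect to be most delicate is translating the covariance estimate of Theorem~\ref{theo:mixing} into an $\alpha$-mixing bound whose dependence on the cardinalities $m,n$ meets the precise hypotheses of a Bolthausen-type theorem, together with a careful symmetry argument for the stationarity of $\mu$, which is morally immediate from the graphical construction but still requires bookkeeping through Sections~\ref{sec:graphical_representation}--\ref{sec:perfect_simulation}.
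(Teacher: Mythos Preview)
Your approach is essentially the paper's: both feed the exponential mixing of Theorem~\ref{theo:mixing} into Bolthausen's CLT for stationary mixing random fields on $\Zd{2}$. Two points of comparison are worth noting. First, the paper does not verify a generic bound of the shape $\alpha_\mu(m,n,r)\le\phi(m,n)e^{-cr}$ with $\phi$ polynomial; instead it targets the specific hypothesis of Remark~1 in Bolthausen (1982), namely $\sum_{n\ge1} n\,(\alpha_{2,\infty}(n))^{\delta/(2+\delta)}<\infty$, where one of the two index sets is allowed to be \emph{infinite}. The mixing estimate of Theorem~\ref{theo:mixing} still controls this because the exponential weight $L(i,j)e^{-(\beta-\til\beta)L(i,j)}$ can be summed over the unrestricted side, yielding $\alpha_{2,\infty}(n)=O(n^2e^{-(\beta-\til\beta)n})$; your ``$\phi(m,n)$ polynomial'' heuristic would not by itself cover the case $n=\infty$, so this is indeed the delicate point you flagged. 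Second, your last paragraph---replacing $\mu f$ by $\mu_{_V}f$ in the centring via Theorem~\ref{theo_space_bounds} and showing the correction $\sqrt{|V|}\,(\mu f-\mu_{_V}f)\to0$---is a step the paper's own proof passes over; you are right that it is needed and that the exponential space bound handles it.
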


\begin{example}
Define the function $ f(\g x) = \ind\{\dg{0}{k}(\g x) \geq 1\}$, where $\dg{0}{k}(\g x)$ is the degree of the vertex located in the origin of $\Zd{d}$. Define its translation $\tau_i f(\g x) = \ind\{\dg{i}{k}(\g x) \geq 1\}$, for fixed $k\in \mathbb{N}$. Assuming that $\beta > \beta^{\ast}$ and define
\[\sigma^2 = \sum\limits_{i\in \Zd{d}} \mu(\g{x} :\dg{0}{k}(\g x) \geq 1, \dg{i}{k}(\g x) \geq 1 ) -\mu(\g{x} :\dg{0}{k}(\g x) \geq 1)\mu(\g{x} :\dg{i}{k}(\g x) \geq 1) > 0\,, \]
then we have that
\[\dfrac{1}{\sqrt{|V|}} \left(\sum\limits_{i\in V}\left(\,\ind\{\dg{i}{k}(\g x) \geq 1\}  -\mu_{_V}(\g x : \dg{i}{k}(\g x) \geq 1)\,\right)\right)\,\, \mathrel{\mathop{\Longrightarrow}^{\mathcal{D}}_{V\to \Zd{d}}}\,\, \mathcal{N}(0,\sigma^2)\,.\]
\end{example}

\section{Graphical Representation}\label{sec:graphical_representation}

In this section we present the graphical construction of the birth and death process inspired by \cite{fernandez2001loss} which will be the key ingredient to prove all the results stated before as well as to construct a perfect simulation scheme. The construction and the proofs are similar to those  in \cite{fernandez2001loss}, therefore we will omit the proofs.

To each pair of vertices $\{i,j\}\in E$ we associate an independent marked Poisson process on $\mathbb{R}$ with rate $e^{-\beta L(i,j)-\beta M}$. Let $T_k^{ij}$ be the ordered occurrence times of the Poisson process such that $T^{ij}_0 < 0 < T_1^{ij}$. To each occurrence time $T_k^{ij}$ we associate an independent mark $S^{ij}_k$ exponentially distributed with mean 1 and an independent mark $U^{ij}_k$ uniformly distributed on $(0,1)$. In a nutshell, an edge $\{i,j\}$ is born at the random times $T^{ij}_k$ and it survives $S^{ij}_k$ time units.

We define the random family of marked Poisson process by
\[\rec = \{ \, \{ (\, \{i,j\} ,  T^{ij}_k , S^{ij}_k , U^{ij}_k\,) : k \in \mathbb{Z} \}\,  :\{i,j\}\in E \}.\]
Each quartet  $R= (\{i,j\} ,  T^{ij}_k , S^{ij}_k , U^{ij}_k) \in \mathbf{R} $ can be represent by a marked rectangle $(\{i,j\} \times [T^{ij}_k , T^{ij}_k +S^{ij}_k], U^{ij}_k )$ with \textit{basis} $\{i,j\}$, \textit{birth time} $T^{ij}_k$, \textit{lifetime} $S^{ij}_k$ and mark $U^{ij}_k$. In this way, for a rectangle $R=(\{i,j\}, t, s, u )$ we denote $\basis{R}=\{i,j\}$ , $\birth{R}=t$, $\death{R}=t+s $, $\life{R}=[t, t+s]$ and $\flag{R}=u$.

For an initial graph $\g {z}$, it is associated an independent random initial life time $S_0^{ij}$, exponentially distributed with mean $1$, and an independent uniform mark $U_0^{ij}$ on $(0,1)$ for each edge $\{i,j\}$ in graph $\g {z}$ ($z_{ij}=1$). Define the set of initial rectangles
\begin{equation}\label{eq:initial_rectangles}
\reci{z} = \{ \,(\, \{i,j\} ,  0, S^{ij}_0 , U^{ij}_0\,) :  \mbox{for } \{i,j\}\in E \text{ such that } z_{ij}=1 \}\,.
\end{equation}

For $s,t \in \mathbb{R}$, $s< t$, define the set of rectangles born on the time interval $[s,t]$ by
\[ \rect{s}{t}  =\{ \, R \in \rec : \birth{R}\in [s,t] \}\,. \]

For the model defined by \eqref{eq:def-gibbs}, Assumption \ref{assumption1} guarantees that the existence of the edge $\{i,j\}$ in the graph depends on the edges that are connected to vertex $i$ or $j$. In general, we say that there exists a \textit{dependence relation} between two edges in the graph if they share a common vertex. We define this dependence relation ($\sim$)
between edges by
\[ \{i,j\}\sim\{k,l\} \qquad\mbox{ if }\quad \{i,j\}\cap\{k,l\}\neq \emptyset \]
and between rectangles by
\begin{equation}\label{def:dependence}
 R \sim R' \qquad\mbox{ if }\quad \basis{R}\cap\basis{R'}\neq\emptyset \quad\mbox{ and } \quad \life{R}\cap\life{R'}\neq \emptyset \,.
\end{equation}

In the next sections, we consider the probability space given by the product of the spaces generated by the rectangles $\rec$ and initial rectangles $\reci{\g z}$. We denote it by $(\Omega, \mathscr{F}, \P)$. We also write $\esp$ for the respective expectation.

\subsection{Construction of the independent multigraph process}\label{sec:multigraph_process}

To make the notation easier to follow we shall reserve the bold roman letters $\g{x},\g{z},\g{y}$  to represent graphs and the greek letters $\eta, \xi$ to represent the processes defined on graphs.

For $\g z \in \mathbb{N}^E$, define the process $(\free{\g z})_{t\geq 0}$ on $\mathbb{N}^E$ by
\begin{equation}
\free{\g z}(i,j)=\sum\limits_{R \in \rect{0}{t}\cup \reci{z}}\ind \{ \basis{R}=\{i,j\}, \life{R}\ni t  \}\,.
\end{equation}

The process described above is a product of independent birth-and-death process on $\mathbb{N}^E$ with initial graph $\g {z}$ whose generator is given by
\begin{equation}\label{eq:generator_indep_multi}
A^0f(\g x) = \sum\limits_{\{i,j\}\in E }e^{-\beta L(i,j)-\beta M}[f(\g x^1_{ij}) - f(\g x)] + \sum\limits_{\substack{ \{i,j\}\in E}} x_{ij}[f(\g x^0_{ij}) - f(\g x)]\,.
\end{equation}

In this ``free" process an edge is added in the graph every time it tries to be born.  Because of this lack of restriction, an edge is allowed to be added in the graph when it is already in the graph, giving rise to a \textit{multigraph} structure. In this case, $\free{\g z}(i,j)$ corresponds to the number of edges connecting $i$ and $j$ at time $t$.

The invariant and reversible measure for this process, denoted by $\mu^0$, is the product distribution on $\mathbb{N}^E$ for which the (marginal) number of multiple edges $\{i,j\}$ is given by a Poisson random variable with mean $e^{-\beta L(i,j)-\beta M}$, that is,
\begin{equation}
\mu^0(x_{ij}=k) = \dfrac{(e^{-\beta L(i,j)-\beta M})^k}{k!}\exp(e^{-\beta L(i,j)-\beta M})\,.
\end{equation}

In a nutshell, the invariant measure $\mu_0$ is defined on the set of multigraphs with independent edges. Because of this well defined structure, this measure will be called \textit{independent multigraph distribution}.

\subsection{Finite volume construction of the dependent process}\label{sec:finite_construction}

As defined in Section \ref{sec:multigraph_process}, the independent multigraph process is constructed using the graphical representation by rectangles introduced in Section \ref{sec:graphical_representation}.
In this section, we describe the cleaning operation that should be applied in the independent multigraph process in order to construct the process $(\dep{V}{\g x})_{t\geq0}$ on $\X^V$, for $V \subset \Zd{d}$ finite, with generator given by \eqref{eq:generator_gibbs}.

Let $\rec^V[0,t] = \{R\in \rec[0,t] : \basis{R}\subset E_V\}$. To construct the independent process on $\X^V$, for $V \subset \Zd{d}$ finite, we use the set of rectangles $\rec^V[0,t]$ and the set of initial rectangles $\rec^V(\g{x})$ associate with the initial graph $\g{x}\in \X^V$. 
To construct the dependent process with generator given by \eqref{eq:generator_gibbs}, some rectangles are erased from the set $\rec^V[0,t]\cup \rec^V(\g{x})$, using a cleaning procedure, resulting the set $\kept_{\g{x}}^{V}[0,t]$ of \textit{kept} rectangles at time $t$. The cleaning procedure used to decide which rectangles are erased or kept are described below.

At time $0$ we include all rectangles of $\rec^V(\g{x})$ in $\kept_{\g{x}}^{V}[0,t]$. Since $V$ is finite we can move forward ordering the birth and death marks as $0 < r_1 < r_2 < \cdots < r_N < t$.

We construct the process $\dep{V}{\g x}$ as following:

\begin{enumerate}
\item We set $\eta^{\,V,\,\g x}_{\,0}=\g{x}$.
\item Supose that $\eta^{\,V,\,\g x}_{\,r}$ is already defined, and that $r_{k-1} \leq r < r_{k}$. We set
\[\eta^{\,V,\,\g x}_{\,s}=\eta^{\,V,\,\g x}_{\,r}\, , \quad r \leq s < r_k\,. \]
if $r \geq r_N$, then
\[\eta^{\,V,\,\g x}_{\,s}=\eta^{\,V,\,\g x}_{\,r}\, ,\quad r \leq s < t\,. \]
\item If $r_k$ is a death time, that is, $r_k=l+s$ for some $R=(\{i,j\},l,s,u) \in \rec^V[0,t]$, then we delete the edge $\{i,j\}$ of the graph by setting, for all $\{m,n\}\in V$,
\[  \eta^{\,V,\,\g x}_{\, r_k}(m,n)=\begin{cases}
0, \text{ if } \{m,n\}=\{i,j\}\\
\eta^{\,V, \, \g x}_{\,r_{k-1}}(l,m), \text{ otherwise }
\end{cases}\]
Go back to step [2].
\item If $r_k$ is a birth time, that is, $r_k=l$ for some $R=(\{i,j\},l,s,u) \in \rec^V[0,t]$, then if
\begin{equation}\label{eq:test_kept}
\eta^{\,V,\,\g x}_{\,r_{k-1}}(i,j)=0 \text{ and } u < \Q(\{i,j\}\,|\,\eta^{\,V,\,\g x}_{\,r_{k-1}})
\end{equation}
we add the edge $\{i,j\}$ in the graph by setting 
\[\eta^{\,V,\,\g x}_{\, r_k}(m,n)=\begin{cases}
1, \text{ if } \{m,n\}=\{i,j\}\\
\eta^{\,V,\,\g x}_{\, r_{k-1}}(m,n), \text{ otherwise }
\end{cases}\]
and we keep the rectangle $R$, that is, $R$ is added in the set of kept rectangles $\kept_{\g{x}}^{V}[0,t]$.
In either case, set $\eta^{\,V,\,\g x}_{\, r_k}=\eta^{\,V,\,\g x}_{\, r_{k-1}}$ go back to step [2].
\end{enumerate}

We can also construct the process $\eta_{\,t}^{V,\, \g x}$ described above directly from the set of kept rectangles. To do this, we first generate rectangles by running the independent multigraph process from $0$ to $t$ with initial graph $\g x \in \X^V$. After that, we decide which rectangles are kept using successively the test given by \eqref{eq:test_kept}. Basically, the test $\eqref{eq:test_kept}$ does not allow multiple edges in the graph and it only adds a non-existent edge in the graph with probability $\Q(\cdot|\cdot)$ given by \eqref{eq:prob_M}.  Using directly the kept rectangles, the process $\dep{V}{\g x}$ on $\X^V$ is defined by
\begin{equation}\label{eq:dep_const_kept}
\dep{V}{\g x}(i,j) = \ind \left\lbrace \{i,j\}\in \{\basis{R} : R\in\kept_{\g{x}}^{V}[0,t] ,\,  \life{R}\ni t  \}\,\right\rbrace \,.
\end{equation}

We show in Theorem \ref{theo:clan_finite} that $\dep{V}{\g x}$ has generator $\bold{A}^V$ given by \eqref{eq:generator_gibbs} restricting the sums to the set of pairs of vertices contained in $E_V$. Since $\mu_{_V}$ is reversible for this process and we have an irreducible Markov process with a finite state space $\X^V$, $\dep{V}{\g x}$ converges in distribution to $\mu_{_V}$ for any initial graph $\g x \in \X^V$. This implies that $\mu_{_V}$ is the unique invariant measure for this process.

Set two initial graphs $\g x\in \X^V$ and $\g z \in \mathbb{N}^{E_V}$ such that $x_{ij} \leq z_{ij}$, for all $\{i,j\}\in E_V$. 
We construct the process $\dep{V}{\g x}$ and $\free{\g z}$ using the same set $\rec$ and the same set of initial rectangles for common edges in $\g x$ and $\g z$. Since in the independent multigraph process $\free{\g z}$ all rectangles are kept we have that

\begin{equation}\label{eq:relacao_free_dep}
\dep{V}{\g x}(i,j) \, \leq \,  \free{\g z}(i,j) \quad \mbox{for all } \{i,j\}\in E_V\,.
\end{equation}

The construction given by \eqref{eq:dep_const_kept} can be done in a stationary way for $t \in \mathbb{R}$. Indeed, since $E_V$ is a finite set, there exists a sequence of random times $\tau_i$ with $\tau_i \to \pm\infty$ as $i \to \pm\infty$, such that $\xi_{\tau_i}$ corresponds to the empty graph, that is $\xi_{\,\tau_i}(i,j)=0$, for all $\{i,j\}\in E_V$. In other words, in each $\tau_i$ no rectangle is alive.  Thus, we can construct the set of kept rectangles independently in each random intervals $[\tau_i,\tau_{i+1})$ using the rectangles of $\rec[\tau_i,\tau_{i+1}] $ and forgetting the set of initial rectangles. Let us denote by $\kept^{V}$ the resulting set of kept rectangles and $\eta^{\,V}_{\,t}$ the process defined as in \eqref{eq:dep_const_kept}. By construction, $\kept^V$ has a time translation-invariant distribution. The process $\eta^{\,V}_{\,t}$ has generator $\bold{A}^V$ given by \eqref{eq:generator_gibbs} and distribution independent of $t$ given by $\mu_{_V}$. This implies that, for any measurable function $f$ and any $t\in \mathbb{R}$,
\begin{equation}\label{eq:exp_prob_finite}
\mu_{_V} f = \esp\left[  f(\eta^{\,V}_{\,t} )\right] \,.
\end{equation}

Taking $f(\g x) = x_{ij}$, for $\{i,j\}\in E_V$, we have that
\begin{equation}
\mu_{_V}(\, \eta^{\,V}(i,j) = 1 \,)\, \leq \, e^{-\beta L(i,j)-\beta M}\,,
\end{equation}
since $\eta^{\,V}_{\,t}(i,j) \leq \free{}(i,j)$ for all $\{i,j\}\in E_V$ and $\free{}(i,j)$ has Poisson distribution with mean $e^{-\beta L(i,j)-\beta M}$.

\section{Infinite volume construction of the dependent model}
\label{sec:infinite_volume_const}

The finite volume procedure described in Section \ref{sec:finite_construction} cannot be directly applied to construct the process for infinite $V$, since in this case we can not order the birth marks because there is not a first Poisson mark.

However, in the finite construction, in order to decide whether a rectangle $R$ is kept at time $t$,  it is necessary to look at the set of rectangles that were born before $R$, are alive at the birth time of $R$ and whose basis intersects $R$. This describes the relation of ``being an ancentor of'' as defined in the works \cite{fernandez2001loss} and \cite{ferrari2002perfect}.

For a point $(i,t)\in \Zd{d}\times\mathbb{R}$ define the set of rectangles containing vertex $i$ that are alive at time $t$ by
\begin{equation}
\begin{split}
\A^{i,t}_1 &= \{R \in\rec : \basis{R}\ni i , \life{R}\ni t  \}\,.
\end{split}
\end{equation}

For a rectangle $R$, define the set of ancestors of $R$ as the set of rectangles born before $R$ that have the dependence relation with $R$, that is
\begin{equation}
\A^R_1 = \{R'\in \rec : R\sim R', \birth{R'} < \birth{R}\}\,.
\end{equation}

The \textit{n}th generation of ancestors of the rectangle $R$ is defined recursively by
\begin{equation}
\A^R_n=\{R'' \in \rec : R'' \in A_1^{R'} \mbox{ for some } R'\in A_{n-1}^R\}
\end{equation}
and, for a point $(i,t)\in \Zd{d}\times\mathbb{R}$ it is defined by
\begin{equation}
\begin{split}
\A^{i,t}_{n}&=\{R'' \in \rec : R'' \in A_1^{R'} \mbox{ for some } R'\in A_{n-1}^{i,t}\}\,.
\end{split}
\end{equation}

We define the \textit{clan of ancestors} of the vertex $i$ at time $t$ as the collection of all generations of ancestors of the point $(i,t)$ and denote it by
\begin{equation}\label{def:clan_vertice}
\A^{i,t} = \bigcup\limits_{n\geq 1}\A_n^{i,t} \,.
\end{equation}

The relation ``being an ancestor of'' gives rise to a model of \textit{backward oriented percolation}. We say that there is backward oriented percolation in $\rec$ if there exists $(i,t)$ such that $\A^{i,t}_n\neq \emptyset$ for all n; that is, there exists $(i,t)$ with infinitely many generations of ancestors. 
Theorem \ref{theo:clan_finite} states that in a finite time interval, the existence of the graph process defined on $\Zd{d}$ is guaranteed as long as all rectangles associate to each vertex $i$ of the graph have not a infinite number of ancestors. In other words, the existence of edges with one end given by the vertex $i$ depends only on a finite set of vertices of graph.
\begin{theorem}\label{theo:clan_finite}
\begin{enumerate}
\item If, with probability 1, $\A^{i,t}\cap \rec[0,t]$ is finite for any $i\in \Zd{d}$ and $t \geq 0$, then for any (possible infinite) $V \subset \Zd{d}$, the process with generator $A^V$ is well defined for any initial graph $\g x \in \X^V$ and has at least one invariant measure $\mu^V$.
\item If, with probability 1, there is no backward oriented percolation in $\rec$, then the process with generator $A$ can be constructed in $(-\infty,\infty)$ in such a way that the marginal distribution of $\eta_t$ is invariant.
\end{enumerate}
\end{theorem}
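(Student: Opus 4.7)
The plan is to extend the finite-volume construction of Section \ref{sec:finite_construction} to possibly infinite $V$ via the clan-of-ancestors device of Section \ref{sec:infinite_volume_const}. The obstruction when $V$ is infinite is that one cannot order the (infinitely many) birth times of $\rec^V[0,t]$; the clan localizes the cleaning at each $(\{i,j\},t)$ to a finite set of rectangles, where the finite-volume algorithm applies verbatim.

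For part (1), fix $V\subset \Zd{2}$, an initial graph $\g x\in \X^V$, and $t\geq 0$. For each edge $\{i,j\}\in E_V$, I will set $\dep{V}{\g x}(i,j)$ to be the output of the cleaning procedure of Section \ref{sec:finite_construction} applied to $\A^{ij,t}\cap \bigl(\rec^V[0,t]\cup \reci{x}\bigr)$, which is almost surely finite since it is contained in $\A^{i,t}\cap \rec[0,t]$ together with the locally finite contribution of initial rectangles still alive at $t$. By the very definition of ``ancestor'', every rectangle outside this clan is irrelevant to the status of $\{i,j\}$ at $t$, so the result is independent of the finite enclosing window chosen to perform the cleaning, which makes the construction consistent across edges and times. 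The resulting process is Markov with generator $\A^V$ from \eqref{eq:generator_gibbs}: conditional on $\dep{V}{\g x}=\g y$, birth marks on an empty edge $\{i,j\}$ arrive at rate $e^{-\beta L(i,j)-\beta M}$ and are accepted with probability $Q(\{i,j\}\,|\,\g y)$, while every present edge dies at rate $1$, which matches the two terms of \eqref{eq:generator_gibbs}. Existence of an invariant measure $\mu^V$ then follows by compactness of $\X^V$ in the product topology and a Krylov--Bogolyubov argument applied to the Ces\`aro averages $\frac{1}{T}\int_0^T \P(\dep{V}{\g x}\in\cdot)\,dt$, any weak-$*$ subsequential limit of which is invariant.

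For part (2), the absence of backward oriented percolation upgrades the hypothesis to: for every $(\{i,j\},t)\in E\times\mathbb{R}$, the entire clan $\A^{ij,t}$ is almost surely finite (each generation is a.s. finite by local Poisson-thinness, and the number of non-empty generations is finite by assumption). I can therefore carry out the cleaning inside $\A^{ij,t}$ with no reference to an initial configuration, obtaining a set $\kept$ of kept rectangles and a process $\eta_t$ defined on all of $\mathbb{R}$. The law of $\rec$ is invariant under time shifts $\tau_s$, and the cleaning is equivariant (the state at $(\{i,j\},t)$ read from $\tau_s\rec$ equals the state at $(\{i,j\},t+s)$ read from $\rec$), so the marginal distribution of $\eta_t$ does not depend on $t$, furnishing the required invariant measure for the process with generator $\A$.

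The main obstacle is the consistency/generator step in part (1): one must argue that the locally built process really reproduces the semigroup of $\A^V$ even though no global ordering of rectangles is available. The clan construction is designed precisely for this, but a rigorous verification goes through a coupling with an exhausting sequence of finite sub-volumes $V_n\uparrow V$, where the constructions of Section \ref{sec:finite_construction} apply directly; one then passes to the limit using the almost-sure finiteness of $\A^{i,t}\cap \rec[0,t]$ to conclude that the state computed at each fixed edge stabilizes as $n\to\infty$ and that the infinitesimal rates in $V_n$ converge to those prescribed by $\A^V$.
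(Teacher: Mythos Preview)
Your proposal is correct and follows essentially the same route as the paper: localize the cleaning to the finite clan of ancestors, define the process via the resulting kept rectangles, verify it has generator $\A^V$, and invoke compactness of $\X^V$ for the invariant measure; for part (2), use the time-shift invariance of $\rec$ and equivariance of the cleaning to conclude stationarity.

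The one noteworthy difference is in the generator verification. The paper works with the vertex-clans $\A^{i,t}$ (rather than edge-clans), declares all initial rectangles kept, and then checks the generator by a direct infinitesimal computation of $\esp[f(\eta_{t+h})-f(\eta_t)]$, appealing to the analogous calculation in \cite{ferrari2002perfect}. You instead propose reaching the generator through an exhausting sequence $V_n\uparrow V$ and the stabilization of the locally computed state. Both arguments are valid; the paper's direct computation is slightly more self-contained, while your limiting argument makes the role of the clan's finiteness (stabilization) more explicit.
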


The existence of the invariant measure of the process with generator $A$, as stated in Theorem \ref{theo:uniqueness}-(1), follows directly from Theorem \ref{theo:clan_finite} (the complete proof of Theorem \ref{theo:uniqueness} is given in Section \ref{sec:proofs}). 
In this way, we shall denote by $\mu$ the marginal distribution of the process $\eta_{\,t}$ described in Theorem \ref{theo:clan_finite}-(2). As in the finite case,
\begin{equation}\label{eq:ineq_free_dep}
\eta_{\,t}(i,j)\, \leq \, \free{}{(i,j)}
\end{equation}
for all $\{i,j\}\in E$. Analogously to the finite case, 
\begin{equation}\label{eq:ineq_free_dep_exp}
\mu\{\,\eta_{\,t}(i,j)=1\,\} = \esp\, \eta_{\,t}(i,j)\, \leq \, \esp\, \free{}{(i,j)}=e^{-\beta L(i,j)-\beta M}\,.
\end{equation}

By \eqref{eq:process_t}, we have, for any $t\in\mathbb{R}$ and any measurable function $f$, that
\begin{equation}\label{eq:exp_process_inf}
\mu f = \esp f(\eta_{\,t})\,.
\end{equation}

\section{Perfect simulation of $\mu$}\label{sec:perfect_simulation}

The graphical construction described in Section \ref{sec:graphical_representation} induces directly the construction of a perfect simulation algorithm to sample a subgraph from the measure $\mu$.
The idea behind the simulation algorithm involves the construction of the process $\eta_{\,t}$ as described in the proof of Theorem \ref{theo:clan_finite}-(2). In a nutshell, since we assume no backward oriented percolation in $\rec$, the set of kept rectangles $\kept$ can be constructed clan by clan and the process is defined by
\begin{equation}
\eta_{\,t}(i,j) = \ind \left\lbrace \{i,j\}\in \{\basis{R} : R\in\kept ,\,  \life{R}\ni t  \}\,\right\rbrace \,.
\end{equation}

By Theorem \ref{theo:clan_finite}-(2), $\eta_t$ can be constructed in $(-\infty,\infty)$ in such a way that the marginal distribution of $\eta_t$ is $\mu$. Thus, we focus on the construction of the process at time $0$. 

For a finite set of vertices $V \subset \Zd{d}$, this construction involves the set of kept rectangles at time $0$, that can be obtained through the clan of ancestors of all vertices in $V$.
Thus, we obtain the clan of ancestors $\A^{V,\,0}$ such that all rectangles belonging to the clan have basis in $E_{V}$ and they are alive at time $0$. Once we have constructed the clan of ancestors, we only need to apply the cleaning procedure as described in the finite case through the test \eqref{eq:test_kept}.

The algorithms described in this section are based in a non-homogeneous time-backwards construction of the clan of ancestors based in the results proven in the works \cite{fernandez2001loss} and \cite{ferrari2002perfect}. They proved that the clan of ancestors can be obtained coming back in time and generating births of the ancestors with a rate given by the density of the independent multigraph process multiplied by an exponential time factor ensuring that the ancestor has a lifetime large enough to be an ancestor.

For a finite set of vertices $V$ and a finite set of rectangles $\g {H}$, we define the set of edges that are potential ancestors of $\g H$ and $V\times \{0\}$ by
\begin{equation}
\begin{split}
E(\g H, V) &= \{ \{i,j\}\in E : \{i,j\}\sim\basis{R}, \text{ for some } R\in \g H\,\}\\
&\quad \bigcup \{\,\{i,j\}\in E : \{i,j\}\cap V \neq \emptyset\,\}\,.
\end{split}
\end{equation}

For an edge $\{i,j\}\in E(\g H, V)$, define
\begin{equation}
TI(\g H, V, \{i,j\}) = \min\{ \birth{R} : R\in \g H, \basis{R}\sim \{i,j\} \}\,.
\end{equation}

By convention $\min\emptyset=0$. Observe that $TI(\g H, V, \{i,j\})\leq 0$.\\

The following result follows immediately from Theorem 2 in  \citep{ferrari2002perfect}  and we omit its proof. 

\begin{theorem} \label{theo:clan_process}
The clan $\A^{V,\,0}$ is the limit as $t\to \infty$ of a process $\g A_t$, defined by the initial condition $\g A_0=\emptyset$ and the evolution equation
\begin{equation}\label{eq:evolution_eq}
\begin{split}
&\Scale[0.9]{\esp\left( \dfrac{\mathrm{d}f(\g A_t)}{\mathrm{d}t} \mid \g A_s , 0 \leq s \leq t \right)} \\
& \quad \Scale[0.95]{= \sum\limits_{\{i,j\}\in E(\g A_t, V)}\int_{t - TI(\g A_t, V, \{i,j\})}^{\infty} \mathrm{d}s\, e^{-s}e^{-\beta L(i,j)-\beta M}[\,f(\g A_t \cup (\{i,j\},t,s)) - f(\g A_t)\,]}
\end{split}
\end{equation}
Here $f$ is an arbitrary function depending on a finite number of edges intersecting $V$.
\end{theorem}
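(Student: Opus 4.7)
The plan is to invoke Theorem~2 of \cite{ferrari2002perfect}, which provides precisely a non-homogeneous backward-in-time construction of the clan of ancestors for graphical representations of interacting birth and death processes; the work then reduces to verifying that the rate formula in \eqref{eq:evolution_eq} is the one produced by that theorem after specialising to our rectangle-and-edge setup.

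First I would parameterise backward time: set $\g A_t$ to be the restriction of $\A^{V,0}$ to rectangles whose real birth time lies in $[-t,0]$, so that $\g A_0=\emptyset$ and, by Theorem~\ref{theo:clan_finite}, $\g A_t\nearrow \A^{V,0}$ almost surely as $t\to\infty$. The goal then becomes to identify $(\g A_t)_{t\ge 0}$ as a pure-birth Markov jump process whose forward equation is \eqref{eq:evolution_eq}.

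Next I would establish the Markovian structure and read off the jump rates. Because the marked Poisson fields indexing the rectangles are independent across edges and time-homogeneous on $\mathbb{R}$, conditional on $\g A_t$ the rectangles born before $-t$ are a fresh, independent copy of the same Poisson family, so the backward exploration inherits the strong Markov property. A newly exposed rectangle born at real time $-t$ on edge $\{i,j\}$, carrying an independent $\mathrm{Exp}(1)$ lifetime $s$, is appended to $\g A_{t+dt}$ precisely when (i) $\{i,j\}\in E(\g A_t,V)$, i.e.\ the edge lies in a dependence chain reaching the target, and (ii) the lifetime $s$ is long enough that the rectangle's life interval meets either the life of some rectangle already in $\g A_t$ or the target time $0$ (when $\{i,j\}\cap V\neq\emptyset$). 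The threshold for (ii) is exactly $t-TI(\g A_t,V,\{i,j\})$ once $TI$ is read as the backward depth of the most recent such target.

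Multiplying the edge-Poisson intensity $e^{-\beta L(i,j)-\beta M}$ by the $\mathrm{Exp}(1)$ lifetime density $e^{-s}$, restricting to admissible lifetimes and summing over candidate edges in $E(\g A_t,V)$, then testing against an $f$ depending on finitely many edges, yields \eqref{eq:evolution_eq} term by term. The only delicate point I expect is confirming that $E(\g A_t,V)$ is almost surely finite so that the right-hand side of \eqref{eq:evolution_eq} is integrable and the process suffers no explosion on finite $t$-intervals; this follows from the almost-sure finiteness of $\A^{i,t}\cap\rec[0,t]$ granted by Theorem~\ref{theo:clan_finite}-(1). With these ingredients in place, the statement is a direct transcription of Theorem~2 in \cite{ferrari2002perfect} into our notation, which is why the author omits the detailed computation.
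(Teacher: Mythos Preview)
Your proposal is correct and follows exactly the paper's approach: the authors state explicitly that the result ``follows immediately from Theorem~2 in \cite{ferrari2002perfect}'' and omit the proof, and the appendix sketch they provide defines $\g A_t=\A^{V,0}\cap\rec[-t,0]$ and computes the transition rate precisely as you describe. One small attribution slip: the almost-sure finiteness of $\A^{i,t}\cap\rec[0,t]$ is the \emph{hypothesis} of Theorem~\ref{theo:clan_finite}-(1), not its conclusion; the finiteness you need is actually established in the proof of Theorem~\ref{theo:existence} (or via Proposition~\ref{prop:space_clan} under $\beta>\beta^\star$).
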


Observe that $\g A_t$ is a monotone process ($\g A_t \subset \g A_{t+1}$) in which at time $t$ only rectangles with basis in $E(\g A_t,V)$ can be included. A rectangle born at time $-t$ is included if 
\begin{itemize}
\item its basis is dependent ($\sim$) of the basis of some rectangle born later and its lifespan reaches the birth time of such rectangle; or
\item its basis is independent of those of all rectangle born later, but it intersects $V$ and the rectangle survives up to time equal to zero. 
\end{itemize}

The observation described above and Theorem \ref{theo:clan_process} can be translated in the following algorithms.\\

\textit{Algorithm 1: Construction of the backward clan $\A^{V,\,0}$. }
\begin{changemargin}{1.5cm}{1.5cm} 
\begin{enumerate}
\item Set $l=0$ and $\tau_0=0$. Generate $S_0^{\,ij}$, for all $\{i,j\}\in E_V$, independent mean one exponential random variables. Set 
\[\g H =  \{ \,(\{\{i,j\},0,S_0^{\,i_kj_k}\} ): \{i,j\}\in E_V  \}\,. \]
\item For each $\{i,j\}\in E(\g H, V)$, generate an independent random variable $\tau(\{i,j\})$ such that
\[\P(\tau(\{i,j\}) > t) = 1 - \exp(-\nu_{ij}(t))\]
where
\[\nu_{ij}(t) = e^{-\beta L(i,j)-\beta M}e^{-t + TI(\g H, V, \{i,j\})}\ind\{t > \tau_{\,l}\}\,.\]
\item Set $l=l+1$ and $\tau_l=\min\{\tau(\{i,j\}) : \{i,j\}\in E(\g H, V)\}$
\begin{itemize}
\item If $\tau_{\,l} < \infty$, call $\{k,m\}$ be the edge such that $\tau(\{k,m\})=\tau_{\,l}$. Let
\[\g H = \g H\cup\{ \,(\{k,m\},-\tau_{\,l},\tau_{\,l}+TI(\g H, V, \{k,m\}) + S^l)\,\} \]
where $S^l$ is an exponential mean one random variable independent of everything else. Go back to (2).
\item If $\tau_{\,l} = \infty$ set $A^{V,\,0}=\g H$ and stop.
\end{itemize}
\end{enumerate}
\end{changemargin}

\textit{Algorithm 2: Construction of the kept rectangles using a cleaning procedure.}
\begin{changemargin}{1.5cm}{1.5cm} 
\begin{enumerate}
\item Start with $\g H = A^{V,\,0}$ and $\g K=\emptyset$
\item If $\g H = \emptyset$ go to (5). If not, order the rectangles of $\g H$ by time of birth. Let $R_1$ be the first of those rectangles and call $\{i,j\}$ its basis and $\tau_1$ its birth time. Let
\begin{equation}
\begin{split}
&\eta(k,l) = \ind\{ \{k,l\} \in \{ \basis{R} : R\in \g K,\\
&\qquad\qquad\quad \basis{R}\sim \basis{R_1},\life{R}\ni\tau_1\}\}.
\end{split}
\end{equation}
Let $U_1$ be a random variable uniformly distributed in $[0,1]$ independent of everything.
\item If $\eta(i,j)=0$ and $U_1 < \Q(\{i,j\}|\eta)$, then update $H \leftarrow H\setminus\{R_1\}$, $K \leftarrow K\cup\{R_1\}$. Go to (2).
\item If $\eta(i,j)=1$ or $U_1 > \Q(\{i,j\}|\eta)$, then update $H \leftarrow H\setminus\{R_1\}$. Go to (2).
\item Set $\g K^{V,0}=\g K$ and stop.
\end{enumerate}
\end{changemargin}

\textit{Algorithm to simulate a finite region of $\mu$}. We use Algorithm (2) to construct the set of kept rectangles $\g K^{V,0}$ of a finite region $V$. Define the graph with vertex set $V$ by
\begin{equation}
\eta(i,j) = \ind\left\lbrace \{i,j\} \in \{ \basis{R} : R\in \g K^{V,\,0}, \life{R}\ni 0\}\right\rbrace\,.
\end{equation} 
%

\section{Proofs}\label{sec:proofs}

In this section we present the main ideas for the proofs of the theorems of this paper. The results follow with the necessary modifications from the properties of the clan of ancestors studied  in \cite{ferrari2002perfect}.

\subsection{Proof of Theorem \ref{theo:clan_finite}}

\begin{proof}
[Proof of (1)] Without loss of generality set $V=\Zd{d}$. We want to partition the set of rectangles $\rec[0,t]\cup \reci{x}$ into a set of kept rectangles and a set of erased rectangles.
First, all initial rectangles $\reci{x}$ are kept. Since by assumption  $\A^{i,t}\cap \rec[0,t]$ is finite for any $i \in \Zd{d}$ and $t \geq 0$, we can partition this set in kept and erased rectangles following the same procedure as described in Section \ref{sec:finite_construction}. We denote the resulting kept set by $\kept^{i,\,t}_{\g x}[0,t]$ and the resulting erased set by $\mathbf{D}^{i,\,t}_{\g x}[0,t]$.
Denoting 
\[
\kept_{\g x}[0,t] : = \bigcup\limits_{i\in \Zd{d}}\kept^{i,\,t}_{\g x}[0,t] \qquad \qquad \mbox{ and } \qquad \qquad \mathbf{D}_{\g x}[0,t] : = \bigcup\limits_{i \in \Zd{d}}\mathbf{D}^{i,\,t}_{\g x}[0,t]\,,
\]
we have that 
\begin{equation}
\kept_{\g x}[0,t]\cup\mathbf{D}_{\g x}[0,t]= \rec[0,t]\cup \reci{x}\,.
\end{equation}

We define the process as in \eqref{eq:dep_const_kept} by 
\begin{equation}\label{eq:process_initial_graph}
\eta^{\,V,\,\g x}_{\,t}(i,j) = \ind \left\lbrace \{i,j\}\in \{\basis{R} : R\in\kept_{\g{x}}[0,t] ,\,  \life{R}\ni t  \}\,\right\rbrace \,.
\end{equation}

Notice that, for $V$ finite, this construction is equivalent to that presented of Section \ref{sec:finite_construction}.

For $V=\Zd{d}$, we want to show that $\eta^{\,V,\,\g x}_{\,t}$ has generator $\A^V$. To do that, denote $\eta_{\,t}=\eta^{\,V,\,\g x}_{\,t}$ and $\kept_{\g x}=\kept$. Define $\eta^{1}_{\,t,\, ij}$ as the graph obtained at time t
which coincides with $\eta_{\,t}$ for all edges other than $\{i,j\}$ and $ \eta^{1}_{\,t}(i,j)=1$. In the same way, $\eta^{0}_{\,t, \, ij}$ is the graph
which coincides with $\eta_{\,t}$ for all edges other than $\{i,j\}$ and $ \eta^{0}_{\,t}(i,j)=0$. By calculations very similar to Theorem 3.1 in \cite{ferrari2002perfect}, we have
\begin{equation}
\begin{split}
&\Scale[0.95]{\esp\left[ f(\eta_{\,t+h}) - f(\eta_{\,t})   \right]} \\ 
&\hspace{0.6cm}=\sum\limits_{\{i,j\}\in E_V}he^{-\beta L(i,j)-\beta M}\esp\left[\ind\left\lbrace  \eta_t(i,j)=0\right\rbrace \Q(\, \{i,j\}\, |\, \eta_{\,t}) \left[ f(\eta^{1}_{\,t, \, ij}) - f(\eta_{\,t})  \right]\right] \\
&\hspace{0.6cm}+\sum\limits_{\{i,j\}\in E_V}h\,\esp\left[ \eta_t(i,j)\left[ f(\eta^{0}_{\,t, \, ij}) - f(\eta_{\,t})  \right]\,\right]+o(h)\, ,
\end{split}
\end{equation}
which, dividing by $h$ and taking limit, gives, 
\begin{equation}
\dfrac{\mathrm{d}\esp f(\,\eta^{V,\,\g x}_{\,t}\,)}{dt} = \A^V\esp f(\,\eta^{V,\,\g x}_{\,t}\,)\,.
\end{equation}

The existence of an invariant measure follows by compactness, since our process is defined in the compact space $\X$. See Chapter 1 of \cite{liggett1985interacting}.\\

\proof[Proof of (2)] The assumption of no backward oriented percolation implies that the clan of ancestors $\A^{i,t}$ is finite for every $i\in \Zd{d}$ and $t\in \mathbb{R}$. This fact allow us to construct the set of kept rectangles $\kept$ as a partition of $\rec$ in the same way the set $\kept_{\g x}[0,t]$ was construct from $\rec[0,t]\cup\rec(\g x)$ in the proof of part (1) above. We just proceed clan by clan and simply ignore the initial rectangles of $\rec(\g x)$. Note that $\kept$ is both space and time invariant by construction. We define  $\eta_{\,t}$ as
\begin{equation}\label{eq:process_t}
\eta_{\,t}(i,j) = \ind \left\lbrace \{i,j\}\in \{\basis{R} : R\in\kept ,\,  \life{R}\ni t  \}\,\right\rbrace \,.
\end{equation}

By construction, the distribution of $\eta_{\,t}$ does not depend on $t$; hence its distribution is an invariant measure for the process.
\end{proof}

\subsection{Properties of the clan of ancestors}\label{sec:properties_clan}

Before starting the proofs of the main theorems of this paper we need to explore some properties of the clan of ancestors. To this end, let
\begin{equation}\label{eq:clan_of_set}
\A(I) = \bigcup_{i\in I}\A^{i,0}
\end{equation}
be the clan of ancestors of the set of vertices $I\in \Zd{d}$ at time $0$ constructed from $\rec$. In the same way we denote by $\A^V(I)$ the clan of ancestors of $I$ constructed from $\rec^V$, for $V$ finite.

The next results follows directly from the construction of the clan of ancestors. In this way, the proofs of Lemma \ref{lemma:space_ineq} and \ref{lemma:lemma_mixing} follow straightforward from the proof of Theorem 4.1-(3-4) in \cite{fernandez2001loss}.

\begin{lemma}\label{lemma:space_ineq}
Assume that there is no backward oriented percolation with probability 1. Let $V$ be a finite subset of $\Zd{d}$. For a measurable function $f$ with $\suppv{f}\subset V$ we have that
\begin{equation}\label{eq:ineq_space_suppv}
|\mu f - \mu_{_V} f | \leq 2\,||f||_{\infty}\,\P \left(\, \A(\suppv{f})\neq \A^{V}(\suppv{f})\, \right)\,.
\end{equation}
\end{lemma}

\begin{lemma}\label{lemma:lemma_mixing}
Assume that there is no backward oriented percolation with probability 1. Let $V$ be (finite or infinite) subset of $\Zd{d}$. For measurable functions $f$ and $g$ with $\suppv{f},\suppv{g}\subset V$, we have that 
\begin{equation}
|\mu fg - \mu f\mu\, g | \leq 2\,||f||_{\infty}||g||_{\infty}\,\P \left(\, \A(\suppv{f})\sim \widehat{\A}(\suppv{g})\, \right)\,,
\end{equation}
where $ \widehat{\A}(\suppv{g})$ has the same distribution as $ {\A}(\suppv{g})$ but is independent of $ \widehat{\A}(\suppv{f})$.
\end{lemma}

\subsection{Time length and space diameter of a clan of ancestors}\label{sec:time_space_clan}

In this section we focus on two particular properties of the clan of ancestors given by \eqref{def:clan_vertice}. First, define the \textit{time length} of the clan $\A^{i,t}$ as the length of the time interval between $t$ and the first birth in the family of ancestors of $(i,t)$ given by
\begin{equation}
TL(\A^{i,t}) = t - \sup\{ s : \life{R} \ni s,\,\mbox{ for some } R\in \A^{i,t}\}\,.
\end{equation}

Define the \textit{space diameter} of the clan $\A^{i,t}$ by
\begin{equation}
SD(\A^{i,t})= \max\{ d(\{i\},\basis{R}) : R \in \A^{i,t}\}\,.
\end{equation}

The space diameter of the clan $\A^{i,t}$ is the maximum distance between the vertex $i$ and the edges in the projection on $\Zd{d}$ of the clan $\A^{i,t}$.

\begin{proposition}\label{prop:space_clan} If $\beta > \beta^{\ast}$, then
\begin{enumerate}
\item the probability of backward oriented percolation is $0$.
\item for $a \leq \beta -\beta^{\ast}$ and  $M>0$,
\begin{equation}
\esp[\,e^{aSD(\A^{i,t})}\,]\, \leq \, \left(\dfrac{\alpha(\beta-a)e^{-aM}}{1-e^{-aM}\alpha(\beta-a)}\right).
\end{equation}
\item for $\til{\beta} \in (\beta^{\ast},\beta)$ and $M>0$,
\begin{equation}
\P(\,SD(\A^{i,t}) > k\,)\,\leq \, \left(\dfrac{e^{-(\beta-\til\beta)M}\alpha(\til{\beta})}{1-e^{-(\beta-\til\beta)M}\alpha(\til{\beta})}\right)e^{-(\beta-\til{\beta})k}.
\end{equation}
\item  for any positive $b$,
\begin{equation}
\P(\,TL(\A^{i,t})>bt)\, \leq \,\dfrac{1}{2}\alpha(\beta)e^{-(1-\alpha(\beta))bt}\,.
\end{equation}
\end{enumerate}
\end{proposition}

The proof of Proposition \ref{prop:space_clan}, is based on the construction of a branching process that dominates the backward percolation process as usual in this field and straightforward from the construction proposed in \cite{ferrari2002perfect}. This construction is based on a multitype branching process $\B_n$ defined on the space of rectangles in such a way that the ancestors play the role of the branches of the process. The process $\B_n$ induces a multitype random process in the set of edges, denoted by $\b_n$, such that, for a rectangle $R$ with basis $\{i,j\}$, $\b_n^{ij}(k,l)$ is the number of rectangles in the $n$th generation of ancestors of $R$ with basis $\{k,l\}$. The process $\b_n$ is a multitype branching process whose offspring distribution are Poisson with mean
\begin{equation}
\begin{split}
m(\{i,j\},\{k,l\}) &= \ind\{\,\{i,j\}\sim\{k,l\}\,\}e^{-\beta L(k,l)-\beta M} \int\limits_{0}^{\infty}e^{-t}dt\\
& =\ind\{\,\{i,j\}\sim\{k,l\}\,\}e^{-\beta L(k,l)-\beta M}\,.
\end{split}
\end{equation}

\begin{lemma}\label{lemma:mean_process}
Let $m^n$ be the $n$th power of the matrix $m$. We have that
\begin{equation}
\sum\limits_{\{k,l\}}m^n(\{i,j\},\{k,l\} ) \leq [\alpha(\beta)]^n
\end{equation}
where $\alpha$ is given by \eqref{def:alpha}.
\end{lemma}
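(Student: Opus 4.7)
The plan is to prove the bound by induction on $n$, with essentially all the content concentrated in the base case $n=1$; the inductive step is a routine manipulation of matrix products. The one nontrivial ingredient is the geometric fact that the $L^1$-sphere of radius $s$ in $\Zd{2}$ contains exactly $4s$ lattice points.

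For the base case, I would fix an edge $\{i,j\}$ and note that $m(\{i,j\},\{k,l\})$ vanishes unless $\{k,l\} \sim \{i,j\}$, i.e.\ unless $\{k,l\}$ contains $i$ or $j$. Grouping the nonzero terms according to which endpoint of $\{i,j\}$ they share (possibly double-counting the single term $\{k,l\}=\{i,j\}$, which only strengthens the upper bound), I would write
\[\sum_{\{k,l\}} m(\{i,j\},\{k,l\}) \;\leq\; \sum_{p\in\{i,j\}}\sum_{q\in \Zd{2},\,q\neq p} e^{-\beta L(p,q) - \beta M}.\]
Enumerating $\Zd{2} \setminus \{p\}$ by $L^1$-distance and using $|\{q : L(p,q) = s\}| = 4s$, the inner sum equals $\sum_{s=1}^{\infty} 4s\,e^{-\beta s} = 4e^{-\beta}/(1-e^{-\beta})^2$, and the outer sum then contributes a factor $2e^{-\beta M}$, reproducing exactly $\alpha(\beta)$ as defined in \eqref{def:alpha}.

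For the inductive step, assuming the bound for $n$, I would expand $m^{n+1} = m \cdot m^n$, exchange the order of summation, and pull the inductive hypothesis out uniformly in $\{p,q\}$:
\[\sum_{\{k,l\}} m^{n+1}(\{i,j\},\{k,l\}) \;=\; \sum_{\{p,q\}} m(\{i,j\},\{p,q\}) \sum_{\{k,l\}} m^n(\{p,q\},\{k,l\}) \;\leq\; [\alpha(\beta)]^{n} \sum_{\{p,q\}} m(\{i,j\},\{p,q\}).\]
Applying the $n=1$ bound to the remaining row-sum closes the induction.

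There is really no serious obstacle here; the main design point is that the constant $8$ appearing in \eqref{def:alpha} already encodes both the factor $2$ (from summing over the two endpoints of $\{i,j\}$) and the factor $4$ (from the $L^1$-sphere count $4s$ on $\Zd{2}$), so the bound propagates unchanged through the induction and the argument never has to distinguish between edges sharing a vertex $i$ and edges sharing a vertex $j$.
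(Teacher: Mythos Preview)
Your proof is correct and follows essentially the same approach as the paper: both arguments reduce to showing that every row sum $\sum_{\{k,l\}} m(\{i,j\},\{k,l\})$ is bounded by $\alpha(\beta)$ via the $L^1$-sphere count $4s$, and then iterate. The only cosmetic difference is that you package the iteration as an induction on $n$, whereas the paper writes out the $n$-fold product explicitly and bounds each factor by the supremum of the row sum.
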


\begin{proof}
We write
\begin{equation}\label{eq:lemma_mean1}
\begin{split}
&\text{\scalebox{0.85}{ $\sum\limits_{\{k,l\}}m^n(\{i,j\},\{k,l\}) $} }\\
&\hspace{0.8cm} \text{\scalebox{0.85}{$= \sum\limits_{\substack{\{k_1,l_1\} \\ \{k_1,l_1\} \sim \{i,j\} }}e^{-\beta L(k_1,l_1)-\beta M}\ldots \hspace{-15mm}\sum\limits_{\substack{\{k_{n-1},l_{n-1}\} \\ \{k_{n-1},l_{n-1}\} \sim \{k_{n-2},l_{n-2}\} }} \hspace{-10mm} e^{-\beta L(k_{n-1},l_{n-1})-\beta M}
 \hspace{-5mm} \sum\limits_{\substack{\{k,l\} \\ \{k,l\} \sim \{k_{n-1},l_{n-1}\} }}  \hspace{-5mm} e^{-\beta L(k,l)-\beta M}$} }\\
&\hspace{0.8cm} \Scale[0.85]{ \leq e^{-\beta Mn}\left( \sup\limits_{\{i',j'\}}\sum\limits_{\substack{\{k,l\} \\ \{k,l\} \sim \{i',j'\} }}e^{-\beta L(k,l)} \right)^n} \\
&\hspace{0.8cm} = e^{-\beta Mn}\left( \sup\limits_{\{i',j'\}}\sum\limits_{\substack{k\in\Zd{d}\\k\neq i',j'}} \left( e^{-\beta L(i',k)} + e^{-\beta L(j',k)} \right) \right)^n\,.
\end{split}
\end{equation}

Define $\overline{B}^{\,i}_s=\{j\in \Zd{d}, j\neq i: L(i,j)=s\}$. Using the fact that 
\[|\overline{B}^{\,i}_s|\leq {s+d-1\choose d-1}2^d\, \]
we have that
\begin{equation}\label{eq:lemma_mean2}
\begin{split}
\sum\limits_{\substack{k\in\Zd{d}\\k\neq i',j'}} \left( e^{-\beta L(i',k)} + e^{-\beta L(j',k)} \right) &=2\left(\sum\limits_{s=1}^{\infty} \sum\limits_{k\in\overline{B}^{\,i}_s} e^{-\beta s} \right)\\
& \leq 2^{d+1}\sum\limits_{s=1}^{\infty}e^{-\beta s}{s+d-1\choose d-1} \\
&= 2^{d+1}\left( \frac{1}{(1-e^{-\beta})^d}-1\right)\,.
\end{split}
\end{equation}


By \eqref{eq:lemma_mean1} and \eqref{eq:lemma_mean2} the result follows.
\end{proof}

\begin{proof}[Proof of Proposition \ref{prop:space_clan}] 
(1) By the construction of the branching process, for a rectangle $R$ with basis $\{i,j\}$, we have that the number of rectangles in the $n$th generation of ancestors of $R$ is dominated by $\sum\limits_{\{k,l\}}\b_n^{ij}(k,l)$. 

To prove that there is no backward oriented percolation, it is enough to prove that, for fixed $\{i,j\}$,
\begin{equation}\label{eq:proof_space1}
\P\left(\sum\limits_{\{k,l\}}\b_n^{ij}(k,l)\neq 0 \mbox{ for infinitely many n}\right)=0\,.
\end{equation}

Since $\beta > \beta^{\ast}$, we have by Lemma \ref{lemma:mean_process} that
\begin{equation}
\sum\limits_{n}\sum\limits_{\{k,l\}}m^n(\{i,j\},\{k,l\} )< \infty
\end{equation}

By Borel-Cantelli lemma \eqref{eq:proof_space1} follows.\\

\proof[Proof of (2)] We write 
\begin{equation}\label{proof:3ineq_1}
\esp[\,e^{aSD(\A^{i,t})}\,] = \sum\limits_{l=1}^{\infty} e^{al}\,\P(\,SD(\A^{i,t})=l\,)\,.
\end{equation}

Using the branching process $\b_n$, we have
\begin{equation}\label{proof:3ineq_2}
\begin{split}
&\P(\,SD(\A^{i,t})=l\,) \leq \sum\limits_{k=1}^{\infty}\sum\limits_{\substack{ \{r_1,s_1\}\\\cdots \\ \{r_k,s_k\} }}\ind\{\max\{ d(\{i\},\{\{r_1,s_1\},...,\{r_k,s_k\}\}\}=l\}\\
& \hspace{2.8cm}\times\P( \{r_1,s_1\}\ni i, \b_n^{r_1s_1}(r_2,s_2)\geq 1, \cdots, \b_n^{r_{k-1}s_{k-1}}(r_k,s_k)\geq 1)
\end{split}
\end{equation}

By the Markovian property of $\b_n$, we have
\begin{equation}\label{proof:3ineq_3}
\begin{split}
& \P( \{r_1,s_1\}\ni i, \b_n^{r_1s_1}(r_2,s_2)\geq 1, \cdots, \b_n^{r_{k-1}s_{k-1}}(r_l,s_l)\geq 1)\\
& \qquad = \P( \{r_1,s_1\}\ni i)\P( \b_n^{r_1s_1}(r_2,s_2)\geq 1) \cdots\P( \b_n^{r_{k-1}s_{k-1}}(r_k,s_k)\geq 1)\\
&\qquad \leq \ind \text{\scalebox{0.7}{$\{\{r_1,s_1\}\ni i, \{r_1,s_1\}\sim\{r_2,s_2\}, \cdots,\{r_{k-1},s_{k-1}\}\sim\{r_k,s_k\}\}$}}\prod\limits_{j=1}^ke^{-\beta L(r_j,s_j)-\beta M}\,.
 \end{split}
\end{equation}

We can also write
\begin{equation}\label{proof:3ineq_4}
\ind\text{\scalebox{0.85}{$\{\max\{ d(\{i\},\{\{r_1,s_1\},...,\{r_k,s_k\}\}\}=l\}$}}\,e^{al} \leq  e^{a\sum\limits_{j=1}^kL(r_j,s_j)} \,.
\end{equation}

Using \eqref{proof:3ineq_1}-\eqref{proof:3ineq_4} we have that
\begin{equation}
\begin{split}
&\Scale[0.93]{\esp[\,e^{aSD(\A^{i,t})}\,] }\\
&\quad \leq \Scale[0.94]{\sum\limits_{k=1}^{\infty}\sum\limits_{\substack{ \{r_1,s_1\}\\\dots \\ \{r_k,s_k\} }}}\ind \Scale[0.7]{\{\{r_1,s_1\}\ni i, \{r_1,s_1\}\sim\{r_2,s_2\}, \dots,\{r_{k-1},s_{k-1}\}\sim\{r_k,s_k\}\}} \Scale[0.93]{e^{-\sum\limits_{m=1}^k(\beta-a) L(r_m,s_m)-\beta M}}\\
&\quad=\Scale[0.94]{\sum\limits_{k=1}^{\infty}\sum\limits_{\substack{j\in \Zd{d}\\ j\neq i}} }\Scale[0.86]{e^{-(\beta-a)L(i,j)-\beta M}}\Scale[0.94]{\sum\limits_{\substack{ \{r_2,s_2\}\\\cdots \\ \{r_{k},s_{k}\} }}}\ind\text{\scalebox{0.6}{$\{\{r_2,s_2\}\sim\{i,j\}, \dots,\{r_{k-1},s_{k-1}\}\sim\{r_{k},s_{k}\}\}$}}\Scale[0.86]{e^{-\sum\limits_{m=2}^k(\beta-a) L(r_m,s_m)-\beta M}}\\
&\quad =\Scale[0.9]{\sum\limits_{k=1}^{\infty}\sum\limits_{\substack{j\in \Zd{d}\\j\neq i}}}\Scale[0.8]{ e^{-(\beta-a)L(i,j)-\beta M}}\Scale[0.8]{\sum\limits_{\substack{\{r_2,s_2\} \\ \{r_2,s_2\} \sim \{i,j\} }}}\Scale[0.84]{e^{-(\beta-a) L(r_2,s_2)-\beta M}\dots} \Scale[0.85]{\sum\limits_{\substack{\{r_{k},s_{k}\} \\ \{r_{k},s_{k}\} \sim \{r_{k-1},s_{k-1}\} }}} \Scale[0.8]{e^{-(\beta-a) L(r_{k},s_{k})-\beta M}}\\
&\quad \leq\, \sum\limits_{k=1}^{\infty}(e^{-aM}\alpha(\beta-a))^k\,.
\end{split}
\end{equation}
The result follows for $\beta -a \geq \beta^{\ast}$.

\proof[Proof of (3)] Setting $a=(\beta-\til\beta)$, for $\til\beta\in (\beta^{\ast},\beta)$, the result follows from Chebyshev inequality.

\proof[Proof of (4)] It is possible to construct a continuous time branching process $\B^{i,t}$ looking backward on time (of the original marked Poisson process) and associating the ancestors as the branches of the branching process that dominates the clan of ancestors $\A^{i,t}$. In this construction, births in the original marked Poisson process corresponds to the disappearance of branches in the branching process. For a rectangle $R$ with $\basis{R}=\{i,j\}$ and $\birth{R}=0$, we define the continuous time branching process $\psi_t^{\,ij}(k,l)$ by the number of edges of type $(k,l)$ present at time $t$ (of this process) whose initial graph is $\g x^{ij}$, where $x^{ij}(r,s)=\ind\{\{r,s\}=\{i,j\}\}$, that is, $\g x^{ij}$ is a graph with only the edge $\{i,j\}$. It is immediate from construction to see that in this process, each edge $\{i,j\}$ lives a mean-one exponential time after which it dies and gives birth to $C_{kl}$ edges $\{k,l\}$, $\{k,l\}\in E$, with probability
\begin{equation}
\prod_{\{k,l\}}\dfrac{e^{m(\{i,j\},\{k,l\})}m(\{i,j\},\{k,l\})^{C_{kl}}}{C_{kl}!}.
\end{equation}

The infinitesimal generator of the process is given by
\begin{equation}\label{eq:generator_continuous_branching}
\Scale[0.8]{Lf(\psi) =} \sum\limits_{\Scale[0.6]{\{i,j\}\in E}}\Scale[0.78]{\psi(\{i,j\})}\sum\limits_{\Scale[0.65]{\eta\in\mathcal{Y}_0^{ij}}}\prod\limits_{\Scale[0.75]{\substack{\{k,l\}: \\ \eta(\{k,l\})\geq 1}}} \Scale[0.78]{\dfrac{e^{m(\{i,j\},\{k,l\})}m(\{i,j\},\{k,l\})^{\eta(\{k,l\})}}{\eta(\{k,l\})}[f(\psi+\eta-\g x^{ij})-f(\psi)]}
\end{equation}
where $\psi, \eta \in \mathcal{Y}_0=\{ \psi \in \mathbb{N}^E;\, \sum_{\{k,l\}}\psi(\{k,l\})<\infty\}$,  $f: \mathcal{Y}_0\to\mathbb{N}$ and $\mathcal{Y}_0^{ij}=\{ \psi \in \mathcal{Y}_0;\, \psi(\{k,l\})\geq 1 \mbox{ implies } \{k,l\}\sim\{i,j\} \}$.

We define the mean number of edges of type $\{k,l\}$ in $\psi_t$ and its sum over $\{k,l\}$ by
\begin{equation}
M_t(\{i,j\},\{k,l\})=\esp[\,\psi_t^{\,ij}(k,l)\,], \qquad R_t(\{i,j\})=\sum_{\{k,l\}}M_t(\{i,j\},\{k,l\})\,.
\end{equation}

By  Lemma 5.2 in \citep{fernandez2001loss}, we have that
\begin{equation}\label{eq:continuous_branching_ineq}
\P\left(\,\sum_{\{k,l\}}\psi_t^{\,ij}(k,l) > 0\,\right) \leq  R_t(\{i,j\})\leq e^{(\alpha(\beta)-1)t}.
\end{equation}

Therefore,  have that
\[TL(\A^{i,t})\leq TL(\B^{i,t}), \mbox{ for } i \in \Zd{d}\,,\]
and
\begin{equation}
\sum_{\{k,l\}}\psi_t^{\,ij}(k,l)=0 \,\mbox{ implies } \, TL(\A^{i,\,0}) \leq t\,.
\end{equation}

Thus, by \eqref{eq:continuous_branching_ineq}
\begin{equation}
\begin{split}
\P(TL(\A^{i,\,0}) > t) &\leq \sum_{j\in \Zd{2}}\P(\eta_0({i,j}))R_t(\{i,j\})\\
& \leq e^{-(1-\alpha(\beta))t}\sum_{j\in \Zd{2}}e^{-\beta L(i,j)-\beta M}\\
& \leq \dfrac{1}{2}\alpha(\beta)e^{-(1-\alpha(\beta))t}
\end{split}
\end{equation}
\end{proof}

\subsection{Proofs of Existence and Uniqueness}

\begin{proof}[Proof of Theorem \ref{theo:existence}]
It is sufficient to prove that $\alpha(\beta) < \infty$ implies that, with probability $1$, 
\begin{equation}\label{proof:theo_existence_1}
\A^{i,t}\cap \rec[0,t] \mbox{ is finite, for any } i\in \Zd{d} \mbox{ and } t\geq 0\,.
\end{equation}

Thus, Theorem \ref{theo:existence} follows from Theorem \ref{theo:clan_finite}-(1). 

To prove \eqref{proof:theo_existence_1} it is enough to prove that $\A^{i,0}\cap \rec[-t,0]$ is finite with probability $1$, by time translation invariance.
Similary, in the construction of the continuous time branching process in the proof Proposition \ref{prop:space_clan}-(4), for $R$ with $\basis{R}=\{i,j\}$ and $\birth{R}=0$, we define
the process $\widetilde\psi_t^{\,ij}$ which indicates all edges born in $[0,t]$ in the process $\psi_t^{\,ij}$. Notice that, for all $j\in \Zd{d}$
\begin{equation}\label{proof:theo_existence2}
|\A^{i,0}\cap \rec[-t,0]\,| \leq |\,\{R\in \B^{i,0}: \birth{R}\in [-t,0]\}\,| \leq \sum_{\{k,l\}}\widetilde\psi_t^{\,ij}(k,l)\,.
\end{equation}

Taking the expectation of the right-hand side of \eqref{proof:theo_existence2} and proceeding in the same way as done for $\psi_t^{\,ij}$ we get
\begin{equation}
\begin{split}
\esp\left( \sum_{\{k,l\}}\widetilde\psi_t^{\,ij}(k,l) \right) &= \sum_{\{k,l\}}[e^{tm}](\{k,l\}) \\
& = \sum_{\{k,l\}}\sum_{n\geq 1}\dfrac{t^n m^n(\{i,j\},\{k,l\})}{n!}\\
& \leq e^{t\alpha(\beta)} < \infty\,
\end{split}
\end{equation}
since $\alpha(\beta) < \infty$.
Thus, we conclude that $|\A^{i,0}\cap \rec[-t,0]\,|$ is finite with probability $1$.
\end{proof}

\begin{proof}[Proof of Theorem \ref{theo:uniqueness}]
(1) For $\beta> \beta^{\ast}$, we have by Proposition \ref{prop:space_clan} that there is no backward oriented percolation with probability $1$. The uniqueness of $\mu$ is guaranteed by Theorem \ref{theo:clan_finite}-(2) and the construction of the perfect simulation algorithm to simulate from $\mu$. \\

(2) Since for $\beta > \beta^{\ast}$ there is no backward oriented percolation, with probability 1, we have, for a measurable function $f$ with $\suppv{f}\subset V$ that $\A(\suppv{f})$ and $\A^V(\suppv{f})$ are finite. As $V \rightarrow \Zd{d}$, we have that 
\begin{equation}
\P \left(\, \A(\suppv{f})\neq \A^{V}(\suppv{f})\, \right) \to 0\,.
\end{equation}

Using Lemma \ref{lemma:space_ineq}, we have the weak convergence of $\mu_{_V}$ to $\mu$. To prove that $\mu$ is concentrated on the set of graphs with finite degree we use \eqref{eq:exp_process_inf} and \eqref{eq:ineq_free_dep_exp} to get
\begin{equation}
\mu\, \dg{i}{} = \esp\, \dg{i}{}(\eta_{\,t}) = \sum\limits_{j\in \Zd{d}}\esp\, \eta_{\,t}(i,j) \leq  \sum\limits_{j\in \Zd{d}}e^{-\beta L(i,j)-\beta M} \leq \frac{1}{2}\alpha(\beta)\,.
\end{equation}
\end{proof}

\begin{proof}[Proof of Theorem \ref{theo_space_bounds}]
For a measurable function $f$ with $\suppv{f}\subset V$ we can write
\begin{equation}\label{proof:theo_space_eq1}
\begin{split}
\P \left(\, \Scale[0.95]{\A(\suppv{f})\neq \A^{V}(\suppv{f})}\, \right) &\leq \P \left(\, \Scale[0.95]{SD\left(\A(\suppv{f})\right)\geq d(\suppv{f},V^c)}\, \right)\\
& \leq \sum\limits_{i \in \suppv{f}}\P \left(\, \Scale[0.95]{SD\left(\A^{i,\,0}\right)\geq d(\{i\},V^c)}\, \right)\,.
\end{split}
\end{equation}

Using Proposition \ref{prop:space_clan}-(3) we have that the RHS of \eqref{proof:theo_space_eq1} is upper bounded by
\begin{equation}
\left(\dfrac{e^{-(\beta-\til\beta)M}\alpha(\til{\beta})}{1-e^{-(\beta-\til\beta)M}\alpha(\til{\beta})}\right)\sum\limits_{i \in \suppv{f}}e^{ -(\beta-\tilde\beta)d(\{i\},V^c)}\,.
\end{equation}

Thus, the result follows by Lemma \ref{lemma:space_ineq}.
\end{proof}

\begin{proof}[Proof of Theorem \ref{theo:mixing}]
For measurable functions $f$ and $g$ with $\suppv{f}\subset V,\, \suppv{g}\subset V$ we write
\begin{equation}\label{eq:proof_mixing1}
\begin{split}
\Scale[0.91]{\P \left(\, \A(\suppv{f})\sim \widehat{\A}(\suppv{g})\, \right)} &\leq \sum\limits_{ \substack{ i \in \suppv{f} \\  j \in \suppv{g}}}\P(\A^{i,0}\sim \widehat\A^{j,0})\\
&\leq \sum\limits_{ \substack{ i \in \suppv{f} \\  j \in \suppv{g}}}\P(SD(\A^{i,0})+SD(\widehat\A^{j,0}) \geq L(i,j)),
\end{split}
\end{equation}
where $ \widehat{\A}(\suppv{g})$ has the same distribution as $ {\A}(\suppv{g})$ but it is independent of $ \widehat{\A}(\suppv{f})$.
By \eqref{eq:exp_prob_finite} and \eqref{eq:exp_process_inf} we write
\begin{equation}
\mu_{_V}(fg) - \mu_{_V}f\mu_{_V}g = \esp[\,f(\eta_0)g(\eta_0)\,] - \esp[\,f(\eta_0)\,]\esp[\,g(\eta_0)\,]\,.
\end{equation}

Using the following inequality, which is valid for independent random variables $S_1$ and $S_2$,
\[\P(S_1+S_2 \geq l)\leq \sum\limits_{k=1}^l\P(S_1 \geq k)\P(S_2\geq l-k)\]
and Proposition \ref{prop:space_clan}-(3) in the right-hand side of \eqref{eq:proof_mixing1} we get
\begin{equation}\label{eq:proof_mixing2}
\begin{split}
&\P \left(\, \A(\suppv{f})\sim \widehat{\A}(\suppv{g})\, \right)\\
& \qquad \leq\, \left(\dfrac{e^{-(\beta-\til\beta)M}\alpha(\til{\beta})}{1-e^{-(\beta-\til\beta)M}\alpha(\til{\beta})}\right)^2\sum\limits_{ \substack{ i \in \suppv{f} \\  j \in \suppv{g}}}L(i,j)e^{-(\beta-\til\beta)L(i,j)}\,.
\end{split}
\end{equation}

Combining \eqref{eq:proof_mixing2} and  Lemma \ref{lemma:lemma_mixing} the result follows.
\end{proof}

\begin{proof}[Proof of Theorem \ref{theo:central_limit}]
The main idea of this proof is to use the central limit theorem for stationary mixing random fields proved in  \cite{bolthausen1982central} combined with the exponential mixing property given in Theorem \ref{theo:mixing}. To this end, we write $X_i=\tau_if$ and let $\mathcal{A}_V$ be the $\sigma-$algebra generated by $\{X_i: i\in V\}$, for $V\subset \Zd{d}$ finite. Define,
\begin{equation}
\begin{split}
\alpha_{k,l}(n)=&\sup\{\mid \P(A_1\cap A_2) - P(A_1)P(A_2)\mid \,:\, A_1 \in \mathcal{A}_{V_1}, A_2 \in \mathcal{A}_{V_2},\\
&\hspace{1cm} |V_1|\leq k, |V_2|\leq l, d(V_1,V_2) \geq n \}
\end{split}
\end{equation}

We use here the result stated in Remark 1 in \cite{bolthausen1982central}, that says that if there exists a $\delta>0$ such that $||X_i||_{2+\delta}<\infty$ and 
\begin{equation}\label{eq:proof_central_1}
\sum\limits_{n=1}^{\infty}n^{d-1}(\alpha_{2,\infty}(n))^{\frac{\delta}{2+\delta}} < \infty
\end{equation}
then \eqref{eq:theo_central_1} and \eqref{eq:theo_central_2} hold.
In order to prove \eqref{eq:proof_central_1} we write
\begin{equation}
\alpha_{2,\infty}(n) = \sup\limits_{a,g_1,g_2}\mid \mu(g_1g_2) - \mu g_1\mu g_2\mid\,
\end{equation}
where the supremum is taken over the set of $a\in \Zd{d}$, $g_1$ in the set of indicator functions with vertex support on $\suppv{f}\cup\tau_a\suppv{f}$ and $g_2$ in the set of indicator functions with vertex support in 
\[\bigcup\{ \tau_j\suppv{f}\,:\, j\in \Zd{d} \text{ and } L(i,j)\geq n, \forall i \in \suppv{f}\}\,.\]

Using Theorem \ref{theo:mixing}, using the fact that $||g_1||_{\infty}=||g_2||_{\infty}=1$ and taking $C=2\left(\frac{e^{-(\beta-\til\beta)M}\alpha(\til{\beta})}{1-e^{-(\beta-\til\beta)M}\alpha(\til{\beta})}\right)^2$ we have that
\begin{equation}\label{eq:proof_central_2}
\begin{split}
\alpha_{2,\infty}(n) &\leq C\sum\limits_{ \substack{ i \in \suppv{g_1} \\  j \in \suppv{g_2}}}L(i,j)e^{-(\beta-\til\beta)L(i,j)}\\
& \leq 2C\suppv{f}\sum\limits_{  s=n}^{\infty}s{s+d-1\choose d-1}e^{-(\beta-\til\beta)s}\\
&\leq 2C\suppv{f}e^{-(\beta-\til\beta)n}\sum\limits_{  t=0}^{\infty}(t+n){t+n+d-1\choose d-1} e^{-(\beta-\til\beta)t}\\
&\leq 2C\suppv{f}e^{-(\beta-\til\beta)n}\sum\limits_{  t=0}^{\infty}(t+n){\left(\frac{e(t+n+d-1)}{d-1}\right)}^{d-1} e^{-(\beta-\til\beta)t}\,.
\end{split}
\end{equation}
where the second inequality follows because $|\suppv{g_1}| \leq 2|\suppv{f}|$ and the last inequality follows from ${n\choose x}\leq \left(\frac{en}{x}\right)^x$. Because the right hand side of \eqref{eq:proof_central_2} is of order $n^de^{-(\beta-\til\beta)n}$, the condition given by \eqref{eq:proof_central_1} is satisfied. 
\end{proof}

\section{Final Considerations} \label{sec:conclusion}
There are several probabilistic models on infinite graphs proposed in the literature that incorporates dependencies among the edges, but do not take into account distance between the edges. On the other hand, other models incorporate distance between the edges but do not take into account dependencies. The purpose of this work is to propose a Spatial  Gibbs Random Graph Model that incorporates both, the statistics of the graph and the metric space where the vertices are located. The first question to be considered is obviously the existence and uniqueness of such model in infinite volume. This question is still an open problem for models describing more complex relations in the network. In this work we show that in order to have a positive answer to this question we need Assumption \ref{assumption1}. Roughly speaking, this assumption states that when one edge is modified in the graph, the statistics of the graph will be only affect by the edges connecting the modified edge endpoints.  This assumption includes the statistics that are commonly used in the literature to describe transitivity relations in graphs, such as, triangles, k-stars and degrees. Using the graphical construction based on the clan of ancestors not only prove existence and uniqueness of the model but also obtain theoretical results such as law of large numbers, central limit theorems and exponential mixing. These results can be explored to general metric spaces . For example, it is possible to consider that the nodes are obtained by a point process on $\mathbb{R}^d$, such as an homogeneous Poisson process. In this setting, \citep{ferrari2010gibbs} proved the existence of the infinite volume measure for the specific model described in Example \ref{ex:ferrari}. Theoretical results for more general models are still under investigation.  \\



\textit{Acknowledgements.} 
This work
was produced as part of the activities of FAPESP Research, Innovation and Dissemination Center for Neuromathematics, grant 2013/07699-0. AC was supported by a FAPESP's scholarship 2017/25925-4. Part of this work was completed while AC was a Visiting Postdoctoral Researcher at University of Michigan. She thanks the support and hospitality of this institution. NLG was partially supported by grants CNPq 302598/2014-6 and FAPESP  2017/10555-0.

\bibliographystyle{elsarticle-harv} 

\bibliography{references}


%
%
%
\end{document}